\newtheorem{theorem}{Theorem}[section]
\newtheorem{observation}[theorem]{Observation}
\newtheorem{proposition}[theorem]{Proposition}
\newtheorem{corollary}[theorem]{Corollary}
\newtheorem{definition}{Definition}
\newtheorem{remark}{Remark}
\newcommand{\N}{\mathbb{N}}
\begin{document}

\title{(non)-Matching and (non)-periodicity  for $(N,\alpha)$-expansions}
\author{Cor Kraaikamp and Niels Langeveld}
\address[Cor Kraaikamp]{Delft University of Technology, EWI (DIAM), Mekelweg 4, 2628 CD Delft, the Netherlands}
\email[Cor Kraaikamp]{C.Kraaikamp@tudelft.nl}
\address[Niels Langeveld]{ Montanuniversit\"at Leoben, Department Mathematik und Informationstechnologie,
Franz-Josef-Strasse 18
 A-8700 Leoben
AUSTRIA}
\email[Niels Langeveld]{niels.langeveld@unileoben.ac.at}

\date{Version of \today}


\maketitle
 
\begin{abstract}

Recently a new class of continued fraction algorithms, the $(N,\alpha$)-expansions, was introduced in~\cite{KL} for each $N\in\N$, $N\geq 2$ and $\alpha \in (0,\sqrt{N}-1]$. Each of these continued fraction algorithms has only finitely many possible digits. These $(N,\alpha)$-expansions `behave' very different from many other (classical) continued fraction algorithms; see also~\cite{N,dJKN,dJK,CK} for examples and results. In this paper we will show that when all digits in the digit set are co-prime with $N$, which occurs in specified intervals of the parameter space, something extraordinary happens. Rational numbers and certain quadratic irrationals will not have a periodic expansion. Furthermore, there are no matching intervals in these regions. This contrasts sharply with the regular continued fraction and more classical parameterised continued fraction algorithms, for which often matching is shown to hold for almost every parameter. On the other hand, for $\alpha$ small enough, all rationals have an eventually periodic expansion with period 1. This happens for all $\alpha$ when $N=2$. We also find infinitely many matching intervals for $N=2$, as well as rationals that are not contained in any matching interval.


\end{abstract}

\maketitle

\section*{Introduction}
As a variation on the regular continued fraction, $N$-continued fractions were introduced in \cite{BGRKWY} as continued fractions of the form
\begin{equation}
x=\cfrac{N}{d_1+\cfrac{N}{d_2+\cfrac{N}{\ddots}}}\label{introductionexpansion},
\end{equation}
where $N\in \mathbb{N}$, $N\geq 2$, and the digits $d_i$ (aka partial quotients) are positive integers. In contrast with regular continued fractions (the case $N=1$), for $N\geq 2$ real numbers $x\in (0,N)$ have infinitely many different expansions of the form~\eqref{introductionexpansion}. This was first established in~\cite{AW}, and can easily be shown by describing dynamical systems that generate these continued fractions; see~\cite{DKW}. 
Of particular interest of study has been the periodicity of quadratic irrationals ($x\in \mathbb{R}\setminus \mathbb{Q}$ that are a solution to an equation $ax^2+bx+c=0$ with $a,b,c\in\mathbb{Z}$). For regular continued fractions we know that a number is eventually periodic if and only if it is a quadratic irrational. Here the situation is different. In \cite{BGRKWY}  the authors prove that for every quadratic irrational number $x$ there exist infinitely many positive integers $N$ for which the NCF expansions of $x$ is eventually periodic with period-length 1. On the other hand, when using the greedy map $T(x)=\frac{N}{x}-\lfloor \frac{N}{x} \rfloor$ for generating the continued fraction, the authors of \cite{AW}  conjecture there are expansions of quadratic irrationals that are not periodic. This was further investigated in \cite{DKW} where this conjecture was further supported by means of numerical simulations. In this article, we will show that for $(N,\alpha)$-expansions, which are another family of $N$-expansions,  we can find quadratic irrationals with \emph{no}  periodic expansion. In \cite{KL} the $(N,\alpha)$-continued fractions were introduced in the following way. Let $\alpha\in(0,\sqrt{N}-1]$, and define the map $T_{N,\alpha}:[\alpha,\alpha+1]\rightarrow[\alpha,\alpha+1)$ as
\[
T_{N,\alpha}(x)=\frac{N}{x}-\left\lfloor \frac{N}{x}-\alpha\right\rfloor .
\]
Let\footnote{If $\frac{N}{\alpha} - \alpha \in \N$ we set $d_1(\alpha)= \left\lfloor \frac{N}{x}-\alpha\right\rfloor -1$ to avoid a cylinder consisting of only the point $\alpha$; cf.~\cite{dJKN}.} $d_1(x)= \left\lfloor \frac{N}{x}-\alpha\right\rfloor$ and $d_n(x)= d_1(T^{n-1}_{N,\alpha}(x))$ for $n\geq 2$. Note that that all expansions are infinite and unique as $0\not\in [\alpha,\alpha+1]$. Now 
\[
T_{N,\alpha}(x)=\frac{N}{x}-d_1(x) 
\]
gives
\[
x=\frac{N}{d_1(x)+T_{N,\alpha}(x)}
\]
and by using this equation iteratively we find 
\[
x=\frac{N}{\displaystyle d_1(x)+\frac{N}{\displaystyle d_2(x)+\frac{N}{\displaystyle d_3(x)+\ddots}}}.
\]
The digit set is given by $\mathcal{D}_{N,\alpha}:=\{\lfloor \frac{N}{\alpha+1}-\alpha\rfloor, \ldots,  \lfloor \frac{N}{\alpha}-\alpha\rfloor  \}$ which is a finite set of consecutive positive integers. Note that  $0<\alpha \leq \sqrt{N}-1$ ensures that all digits are strictly positive.

In \cite{KL} it was shown that for certain choices of $N$ and $\alpha$ the absolute continuous invariant measure associated with the dynamical system does not have full support on $[\alpha,\alpha+1]$. In these case one or more so-called \emph{gaps} in the attractor of $T_{\alpha}$ on $[\alpha,\alpha+1)$ appear; one or more intervals where the $T_{\alpha}$-invariant measure is zero. This was further investigated in~\cite{dJKN}, where it was shown that $T_{\alpha}$ is always gapless when it has at least five cylinders, i.e., when $\lfloor N/ \alpha\rfloor - \lfloor N/(\alpha +1)\rfloor \geq 4$. For four cylinders, there are certain cases with a large gap. The cases of two, three and four cylinders are studied in detail, and sufficient conditions for gaplessness are given. In~\cite{dJK} it is shown that for $\alpha=\alpha_{\max}=\sqrt{N}-1$ the number of gaps is a finite, monotonically non-decreasing and unbounded function of $N$.


Starting with \cite{CMPT}, recently of lot of attention has been given to the study of matching for parameterized continued fraction algorithms. Examples of such families of continued fraction algorithms are Nakada's continued fractions \cite{N}, and similarly defined continued fractions but giving rise to infinite dynamical systems \cite{KLMM}, Katok and Ugarcovici's continued fractions \cite{KU}, and Tanaka and Ito's continued fractions \cite{TI} to name a few. Matching, for our purposes, can be defined as follows.
\begin{definition}[Matching]\label{def:match}
We say that (stable) matching holds for $\alpha$ if there are $K,L$ such that
$T_{\alpha}^{K}(\alpha)=T_{\alpha}^L(\alpha+1)$ and there is an $\varepsilon>0$ such that for all $\alpha^\prime\in(\alpha-\varepsilon,\alpha+\varepsilon)$ we also have $T_{\alpha^\prime}^{K}(\alpha^\prime)=T_{\alpha^\prime}^L(\alpha^\prime+1)$.
 The numbers $K,L$ are called the matching exponents, $K-L$   is called the matching index and the largest interval $(c,d)$ such that for all $\alpha\in(c,d)$ we have the same matching exponents is called a matching interval.
\end{definition}
For each of the above mentioned families, when $\alpha$ and $\alpha+1$ are replaced by the endpoints of the intervals of the dynamical system under consideration, study showed that the matching intervals cover the entire parameter space except for a set of Lebesgue measure zero. In other words, matching holds almost everywhere. This was shown in~\cite{CT,KLMM,CIT,CLS} respectively. One might start to believe that for any reasonable parameterized dynamical system that gives rise to continued fractions we will find matching almost surely. In \cite{KL} a matching interval is given for $N=2$ and for parameters in this matching interval the natural extension are build. In \cite{CK}, matching intervals are found for every $N$ and natural extensions are build. However, the matching intervals from \cite{KL,CK} are such that the matching index is 0, and their total Lebesgue measure is less than $\sqrt{N}-1$. In this article we prove that for many choices of $N\geq 2$ there are regions of the parameter space $(0,\sqrt{N}-1]$ where there are no matching intervals.

In Section \ref{sec:prelim} we will introduce notation and give some basic properties that hold for all $N$ and $\alpha\in(0,\sqrt{N}-1]$. In Section \ref{sec:nodiv} we zoom in on the case where $N$ is relatively prime with any digit from the digit set $\mathcal{D}_{N,\alpha} $. As will be shown, this is the situation where you have no matching intervals and no periodicity of rational numbers as well as the situation where you can find quadratic irrationals that are not periodic. In Section \ref{sec:N2} we investigate the case of $N=2$ for which all rational numbers are eventually mapped to $1$, and we give infinitely many matching intervals. We also show that for $N=2$ there are infinitely many \emph{bad rationals}; these are rationals that are not in any
matching interval.

\section{preliminary results}\label{sec:prelim}
Fix $N\in\N$, $N\geq 2$, $\alpha\in(0,\sqrt{N}-1]$ and $x\in[\alpha,\alpha+1]$. For readability we suppress the dependence of the variables $N,\alpha$ and $x$ in the notation. 
We view matrices as M\"obius transformations so that 
\[
A(x)=\left(\begin{matrix}
a & b \\
c & d
\end{matrix}\right)(x)=\frac{ax+b}{cx+d}.
\]
Let us define the following matrices:
\[
B_d=
\left(\begin{matrix}
0 & N \\
1 & d
\end{matrix}\right)
\]
and
\begin{equation}\label{eq:Mxalphan}
M_n=M_{\alpha,x,n}=B_{d_{1}}B_{d_{2}}\cdots B_{d_{n}}.
\end{equation}
Similar to the regular continued fraction case, one can check that 
\[
M_{n}(0)=\frac{N}{\displaystyle d_1+\frac{N}{\displaystyle d_2+\frac{N}{\displaystyle d_3+\ddots \frac{N}{d_n}}}}
\]
gives the $n^{\text{th}}$ convergent $c_n$, which will be denoted by $[0;d_1,\dots,d_n]_N$. In fact, $M_{n}$ is given by
\begin{equation}\label{Mn}
M_{n} = \left(\begin{matrix}
p_{n-1} & p_n  \\
q_{n-1} & q_n
\end{matrix}\right)
\end{equation}
where the sequences $(p_n)_{n \geq 1}$ and $(q_n)_{n \geq 1}$ satisfy the recurrence relations

\begin{eqnarray}
    p_{-1}=1, & p_0=0, & p_n=d_np_{n-1}+Np_{n-2},~\text{for } n\geq 1,\label{eq:recforpn}\\
    q_{-1}=0, & q_0=1, & q_n=d_nq_{n-1}+Nq_{n-2},~\text{for } n\geq 1,\label{eq:recforqn}
\end{eqnarray}
and we have  $c_n=\frac{p_n}{q_n}$. We have that
\begin{equation}\label{eq:noncoprimeprod}
    \det(M_n)=\det(B_{d_1}\cdots B_{d_n})=p_{n-1}q_{n}-p_nq_{n-1}=(-N)^n
\end{equation}
so that, in contrast to the regular continued fraction (the case $N=1$),  $p_n$ and $q_n$ are not necessarily co-prime (they might both have $N$ or divisors of $N$ as a common divisor).  We also have that
\begin{equation}\label{eq:xasMnTn}
    x=M_n(T_{N,\alpha}^n(x)) .
\end{equation}
Using~\eqref{eq:recforpn},~\eqref{eq:recforqn},~\eqref{eq:noncoprimeprod}, and~\eqref{eq:xasMnTn} one can show now that $\lim_{n\to\infty} c_n = x$; c.f.~\cite{DKW}.

\subsection{Periodicity of rational numbers for small $\alpha$}
\begin{proposition}\label{prop:FiniteExpansion}
Let $N \geq 2$ and  $0 < \alpha \leq \xi_N-1$, where $\xi_N$ is a positive number such that $\xi_N=\frac{N}{N-2+\xi_N}$ 
and let $t_0,s_0\in\N$ be such, that ${\rm gcd}\{ t_0,s_0\}=1$ and $t_0/s_0\in [\alpha ,\alpha +1]$. Set for $k\geq 1$, 
$$
\frac{t_k}{s_k} = T_{N,\alpha}^k\left( \tfrac{t_0}{s_0}\right),
$$
where $t_k,s_k\in\N$ and ${\rm gcd}\{ t_k,s_k\} = 1$. Then there exists an $n\in\N$, such that $t_n=s_n=1$, and thus that the $(N,\alpha$)-expansion of $t_0/s_0$ is (eventually) periodic, with period-length 1, and where the period consists of only the digit $N-1$:
$$
\frac{t_0}{s_0} = [0;d_1,\dots,d_n,\overline{N-1}]_N;
$$
(as usual the bar indicates the period).
\end{proposition}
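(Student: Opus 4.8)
The plan is a denominator-descent argument in the spirit of the classical proof that every rational has a finite regular continued fraction expansion. Write $x_k:=T_{N,\alpha}^{\,k}(t_0/s_0)=t_k/s_k$ with $\gcd(t_k,s_k)=1$. The first ingredient is the elementary observation that if $x=t/s\in[\alpha,\alpha+1]$ is in lowest terms with digit $d=d_1(x)$, then
$$
T_{N,\alpha}\!\left(\tfrac ts\right)=\frac{Ns-dt}{t}=\frac{(Ns-dt)/g}{t/g},\qquad g:=\gcd(Ns-dt,\,t)=\gcd(N,t),
$$
the last equality because $\gcd(s,t)=1$. Hence the reduced denominator of $T_{N,\alpha}(t/s)$ equals $t/g$; in particular it divides, and so is at most, $t$.

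The second ingredient is what the hypothesis buys us. A short computation shows that $\alpha\le\xi_N-1$ is equivalent to $\alpha^2+N\alpha\le 1$, and together with the always-valid inequality $\tfrac{N}{\alpha+1}-\alpha<N$ this forces $\lfloor\tfrac{N}{\alpha+1}-\alpha\rfloor=N-1$, i.e.\ the smallest digit of $\mathcal D_{N,\alpha}$ equals $N-1$ (the exceptional convention in the footnote concerns only $x=\alpha$ and never affects what follows). Two consequences will be used repeatedly: every digit is $\ge N-1$; and if $x\in[\alpha,\alpha+1]$ satisfies $x>1$ then $\tfrac Nx<N$ gives $d_1(x)=\lfloor\tfrac Nx-\alpha\rfloor\le N-1$, so $d_1(x)=N-1$ and $T_{N,\alpha}(x)=\tfrac Nx-(N-1)\in[\alpha,1)$. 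Finally $\alpha<1$ (since $\xi_N<2$), so $1\in[\alpha,\alpha+1]$, $d_1(1)=\lfloor N-\alpha\rfloor=N-1$, and $T_{N,\alpha}(1)=1$: the point $1$ is the rational fixed point with period $\overline{N-1}$.

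Now I run the descent. Suppose $x_k\ne 1$. If $x_k<1$, i.e.\ $t_k<s_k$, then by the first ingredient $s_{k+1}=t_k/\gcd(N,t_k)\le t_k<s_k$. If $x_k>1$, i.e.\ $t_k>s_k$, then the digit is $N-1$ and $x_{k+1}=\tfrac{Ns_k-(N-1)t_k}{t_k}<1$; writing $g=\gcd(N,t_k)$, the reduced numerator of $x_{k+1}$ is $t_{k+1}=(Ns_k-(N-1)t_k)/g$, and since $t_k>s_k$ we get $t_{k+1}<(Ns_k-(N-1)s_k)/g=s_k/g\le s_k$, whence, applying the first ingredient to $x_{k+1}$ (which is $<1$), $s_{k+2}=t_{k+1}/\gcd(N,t_{k+1})\le t_{k+1}<s_k$. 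In this second case the intermediate iterate $x_{k+1}$ lies in $[\alpha,1)$, so the value $1$ is never ``jumped over''. Therefore, from any index with $x_k\ne 1$ one reaches, after one or two steps, an index whose denominator is strictly smaller; iterating would produce an infinite strictly decreasing sequence of positive integers unless at some stage $x_n=1$. Since the former is impossible, there is an $n$ with $t_n=s_n=1$, and then $x_m=1$, $d_{m+1}=N-1$ for all $m\ge n$, which is exactly the assertion $t_0/s_0=[0;d_1,\dots,d_n,\overline{N-1}]_N$.

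The main obstacle is the failure of $(s_k)_k$ to be monotone: precisely when $x_k>1$ and $\gcd(N,t_k)=1$, the denominator grows, $s_{k+1}=t_k>s_k$. What saves the argument is that the hypothesis $\alpha\le\xi_N-1$ pins the digit in this situation to $N-1$, which both sends $x_{k+1}$ below $1$ and makes the two-step denominator $s_{k+2}$ drop below $s_k$; this is why the descent has to be organised in blocks of length one or two rather than one step at a time.
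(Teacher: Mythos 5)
Your proof is correct and follows essentially the same route as the paper's: a descent on the reduced denominators in blocks of one step (when $x_k<1$) or two steps (when $x_k>1$, where the hypothesis pins the digit to $N-1$ and sends the orbit below $1$). The only cosmetic difference is that you verify the smallest digit equals $N-1$ by the direct computation $\alpha\le\xi_N-1\iff\alpha^2+N\alpha\le 1$, whereas the paper deduces it from $T_{\xi_N-1}(\xi_N)=\xi_N-1$.
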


\begin{proof}
Note that $\xi_N$ is a quadratic irrational (as $N^2+4\neq \Box$) and that $1<\xi_N=\frac{-(N-2) + \sqrt{N^2+4}}{2}<2$. So if $0<\alpha\leq \xi_N-1$ we have that $0<\alpha <1$. In case $t_0/s_0=1$ and ${\rm gcd}\{ t_0,s_0\}=1$ (and thus that $t_0=1=s_0$), we see that $T_{N,\alpha}(1) = \tfrac{N}{1}-(N-1) = 1$, and therefore $1 = [0;\overline{N-1}]_N$. We will consider the cases $\tfrac{t_0}{s_0}\in [\alpha,1)$ and $\tfrac{t_0}{s_0}\in (1,\alpha +1)$ separately.\smallskip\

In case $\tfrac{t_0}{s_0}\in [\alpha,1)$ we find that
$$
\tfrac{t_1}{s_1} = T_{N,\alpha}(\tfrac{t_0}{s_0}) = \frac{Ns_0-d_1t_0}{t_0},
$$
and since ${\rm gcd}\{t_1,s_1\} =1$, $s_1|t_0$, and $t_0/s_0<1$, we find that
$$
s_1\leq t_0<s_0.
$$
By definition of $\xi_N$ we see that one of the $N$-expansions of $\xi_N$ is given by:
\[
\xi_N = \frac{N}{\displaystyle N-2+\frac{N}{\displaystyle N-2+\frac{N}{\displaystyle N-2+\ddots }}} = [0;\overline{N-2}]_N.
\]
Hence if $\alpha = \xi_N-1$, we see that $T_{\alpha}(\alpha +1)=N/\xi_N - (N-1) = \alpha$, and therefore for all $0<\alpha \leq \xi_N-1$ all partial quotients must be at least $N-1$. 

So in case $\tfrac{t_0}{s_0}\in (1,\alpha+1 )$ we have that $d_1=N-1$, since $1<\tfrac{t_0}{s_0} < \xi_N$, and therefore that
\begin{equation}\label{eq:digit1}
   \frac{t_1}{s_1} = T_{N,\alpha}(\tfrac{t_0}{s_0}) = \frac{Ns_0-(N-1)t_0}{t_0}. 
\end{equation}
Also note that $\tfrac{t_1}{s_1} <1$, since for every $x\in (1,\alpha +1)$ we have that $T_{N,\alpha}(x)<T_{N,\alpha}(1)=1$. Once more applying $T_{\alpha}$ yields that:
$$
\frac{t_2}{s_2} = T_{N,\alpha}^2(\tfrac{t_0}{s_0}) = \frac{Nt_0-d_2(Ns_0-(N-1)t_0)}{Ns_0-(N-1)t_0},
$$
and since ${\rm gcd}\{t_2,s_2\} =1$ and $t_0>s_0$, we find that
$$
s_2 \leq Ns_0 - (N-1)t_0 < s_0.
$$
Since $(s_k)_{k\geq 0}$ is a sequence of positive integers, and $s_k>s_{k+1}$ in case $\tfrac{t_k}{s_k}\in [\alpha ,1)$, and $s_k>s_{k+2}$ in case $\tfrac{t_k}{s_k}\in (1,\alpha +1 )$, there must exist a positive integer $n$ such that $s_n=1$. As $1\in [\alpha ,\alpha +1]$ we must have that $t_n=1$, and the expansion of $t_0/s_0$ is periodic from that point on, with period length 1 and partial quotient $N-1$.
\end{proof}

\begin{corollary}\label{cor:N2ratfinorbit}
For $N=2$ we have that for all $\alpha\in(0,\sqrt{N}-1]$ all rational numbers are all eventually periodic, with period-length 1, and where the period consists of only the digit 1.
\end{corollary}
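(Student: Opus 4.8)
The plan is to read off the corollary as the special case $N=2$ of Proposition~\ref{prop:FiniteExpansion}. The only thing that needs checking is that for $N=2$ the threshold $\xi_N-1$ appearing in that proposition is \emph{not} a genuine restriction but equals the right endpoint $\sqrt N-1$ of the full admissible parameter range $(0,\sqrt N-1]$. Once that is established, fixing any $\alpha\in(0,\sqrt 2-1]$ and any rational in $[\alpha,\alpha+1]$ and invoking Proposition~\ref{prop:FiniteExpansion} finishes the argument.

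Concretely, I would first solve the defining relation of $\xi_N$ at $N=2$: from $\xi_N=\frac{N}{N-2+\xi_N}$ we get $\xi_2=\frac{2}{\xi_2}$, hence $\xi_2^2=2$ and, since $\xi_2>0$, $\xi_2=\sqrt 2$. Therefore $\xi_N-1=\sqrt 2-1=\sqrt N-1$ when $N=2$, so the hypothesis $0<\alpha\le\xi_N-1$ of Proposition~\ref{prop:FiniteExpansion} is exactly $0<\alpha\le\sqrt N-1$, i.e.\ it holds for every admissible $\alpha$. (For $N\ge 3$ one has $\xi_N-1<\sqrt N-1$ strictly, which is why the statement is special to $N=2$.)

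Next, for an arbitrary $\alpha\in(0,\sqrt 2-1]$ and an arbitrary rational $t_0/s_0\in[\alpha,\alpha+1]$ in lowest terms, I would apply Proposition~\ref{prop:FiniteExpansion} directly: it produces an $n\in\N$ with $t_n=s_n=1$, i.e.\ $T_{2,\alpha}^n(t_0/s_0)=1$. Since $0<\alpha<1$ we have $1<2-\alpha<2$, so $d_1(1)=\lfloor 2-\alpha\rfloor=1$ and $T_{2,\alpha}(1)=\frac{2}{1}-1=1$; thus $1$ is a fixed point and the orbit emits the digit $N-1=1$ at every step from time $n$ on, giving $t_0/s_0=[0;d_1,\dots,d_n,\overline{1}]_2$. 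As ``all rational numbers'' here means all rationals in the domain $[\alpha,\alpha+1]$ of $T_{2,\alpha}$, this is the full claim. There is essentially no obstacle: the entire content sits in Proposition~\ref{prop:FiniteExpansion}, and the only subtlety is the arithmetic identity $\xi_2=\sqrt 2$ together with the observation that for $N=2$ and $\alpha$ in range the footnote's exceptional case $\tfrac{N}{\alpha}-\alpha\in\N$ never occurs for rational $\alpha$ (it would force $\alpha=1$), so no case distinction is needed.
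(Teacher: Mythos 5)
Your proof is correct and takes exactly the paper's route: the corollary is the case $N=2$ of Proposition~\ref{prop:FiniteExpansion}, and the whole content is the identity $\xi_2=\sqrt{2}$, which makes the hypothesis $0<\alpha\le\xi_N-1$ coincide with the full parameter range $(0,\sqrt{2}-1]$. The paper's own justification is the single sentence ``This follows from the fact that $\xi_2=\sqrt 2$,'' so your write-up is simply a more detailed version of the same argument.
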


This follows from the fact that $\xi_2=\sqrt{2}$.

\subsection{Quadratic irrationals}
Just like for the regular continued fraction expansion, quadratic irrationals $x_0$ will be mapped to quadratic irrationals under the map $T_{N,\alpha}$. All the points in the orbit of $x_0$ under $T_{N,\alpha}$ are therefore roots of a quadratic equation with integer coefficients.  We will deduce recurrence relations for these coefficients. Let $x_0\in [\alpha, \alpha +1)$ be a positive quadratic irrational that is a solution to 
\begin{equation}\label{eq:quadirrstart}
    A_0x_0^2+B_0x_0+C_0=0
\end{equation}
where $A_0(\neq 0) ,B_0, C_0\in\mathbb{Z}$, ${\rm gcd}\{A_0,B_0,C_0\} = 1$, and define $x_1=T_{N,\alpha}(x_0)=\frac{N}{x_0}-d_1$. Then $x_0=\frac{N}{x_1+d_1}$, and substituting $x_0$ in \eqref{eq:quadirrstart} gives
\begin{align}
     A_0\left( \frac{N}{x_1+d_1}\right)^2+B_0\left( \frac{N}{x_1+d_1}\right) + C_0&=0\label{eq:quadirrnext}\\
     C_0x_1^2+(NB_0+2d_1C_0)x_1+N^2A_0+NB_0d_1+C_0d_1^2&=0,\label{eq:quadirrnext2}
\end{align}
where \eqref{eq:quadirrnext2} is found from \eqref{eq:quadirrnext} by multiplying by $(x_1+d_1)^2$. When setting $x_n=T_{N,\alpha}^n(x_0)$ we find that $x_n$ is a solution to 
\begin{equation}\label{eq:quadirrn}
    A_nx_n^2+B_nx_n+C_n=0
\end{equation}
where for $A_n (\neq 0), B_n, C_n\in\mathbb{Z}$ the following recurrence relations hold
\begin{align}
    &A_{n+1}=C_n\label{eq:recA_n}\\
    &B_{n+1}=NB_n+2d_{n+1}C_n\label{eq:recB_n}\\
    &C_{n+1}= N^2A_n+NB_nd_{n+1}+C_nd_{n+1}^2\label{eq:recC_n}
\end{align}
For the determinant we calculate 
\begin{align*}
    B_{n+1}^2-4A_{n+1}C_{n+1}&=(NB_n+2d_{n+1}C_n)^2-4C_n(N^2A_n+NB_nd_{n+1}+C_nd_{n+1}^2)\\
    &=N^2(B_n^2-4A_nC_n).
\end{align*}
Using this iteratively we find
\begin{equation}\label{eq:det}
    B_n^2-4A_nC_n=N^{2n}(B_0^2-4A_0C_0).
\end{equation}

From~\eqref{eq:det} it follows that in case $N=1$ and $\alpha = 0$ (which is the case of the \emph{regular continued fraction} (RCF) \emph{expansion}) we have that $B_n^2-4A_nC_n$ is a positive constant for all $n\geq 0$. From this one easily finds Lagrange's result that a quadratic irrational number $x$ has an eventually periodic RCF expansion. The converse result by Euler is even easier to prove. For a proof, see e.g.~\cite{HW}, or~\cite{Z,BFK}, where a similar result was obtained for the \emph{backward continued fraction expansion}.

\section{When $N\geq 2$ and each digit $d\in\mathcal{D}_{N,\alpha}$ are relatively prime}\label{sec:nodiv}
In this section we study the cases when ${\rm gcd}\{ N,d\} = 1$ for all $d\in\mathcal{D}_{N,\alpha}$. Let  
\begin{equation}\label{def:K}
K = \left\{ (N,\alpha): N\in\mathbb{N}_{\geq 2}, \  \alpha\in(0,\sqrt{N}-1] \text{ such that } {\rm gcd}\{ N,d\}=1 \text{ for all } d\in\mathcal{D}_{N,\alpha} \right\} .
\end{equation}
Note that in particular we have for $N\geq 5$ prime and $\alpha>1$ that $(N,\alpha)\in K$; see Figure~\ref{fig:my_label}.
\begin{figure}
    \centering
    \includegraphics[width=\textwidth]{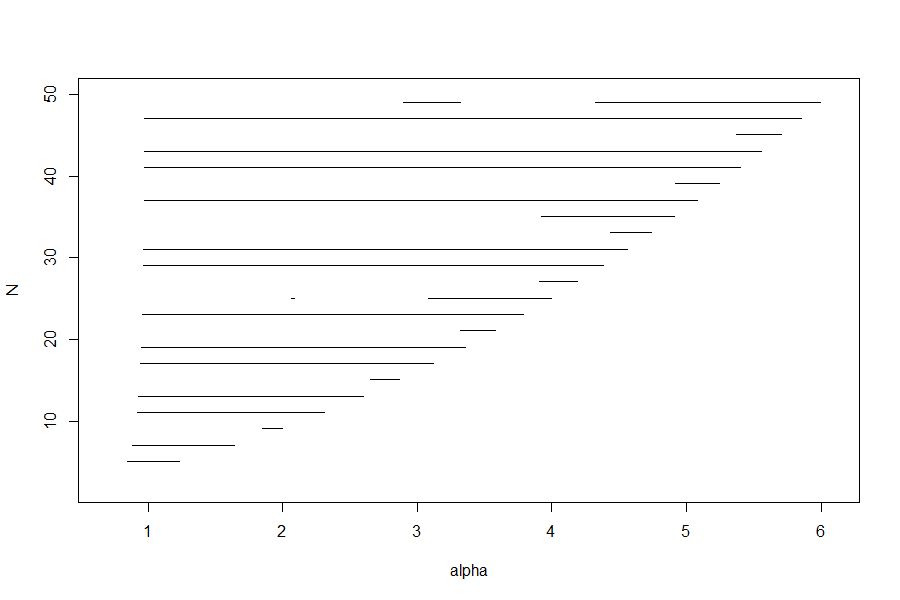}
    \caption{The set $K$, with $N$ on the $y$-axis and $\alpha$ on the $x$-axis.}
    \label{fig:my_label}
\end{figure}
Let us start with a positive observation.
\begin{observation}
For all $(N,\alpha)\in K$ and any $x\in[\alpha,\alpha+1]$ we have that $p_n(x)$ and $q_n(x)$ are co-prime. 
\end{observation}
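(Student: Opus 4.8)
The plan is to reduce everything to the determinant identity \eqref{eq:noncoprimeprod} together with the coprimality hypothesis built into the definition of $K$; the actual content is then a one-line induction modulo $N$.

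First I would observe that, by \eqref{eq:noncoprimeprod}, $p_{n-1}q_n-p_nq_{n-1}=(-N)^n$, so every common divisor of $p_n(x)$ and $q_n(x)$ divides $(-N)^n$, and hence also divides ${\rm gcd}\bigl(q_n(x),N^n\bigr)$. Consequently it suffices to prove the slightly stronger statement that ${\rm gcd}\bigl(q_n(x),N\bigr)=1$ for every $n\geq 0$ and every $x\in[\alpha,\alpha+1]$: this forces ${\rm gcd}\bigl(q_n(x),N^n\bigr)=1$ and therefore ${\rm gcd}\bigl(p_n(x),q_n(x)\bigr)=1$. (Incidentally, \eqref{eq:recforpn} with $p_{-1}=1$, $p_0=0$ gives $N\mid p_n$ for all $n\geq 1$, so the co-primality genuinely hinges on $q_n$ alone.)

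Next I would reduce the recurrence \eqref{eq:recforqn} modulo $N$: since $N\mid Nq_{n-2}$ we get $q_n\equiv d_nq_{n-1}\pmod N$ for all $n\geq 1$, and because $q_0=1$ a straightforward induction gives $q_n\equiv d_1d_2\cdots d_n\pmod N$. Every partial quotient $d_i$ occurring in an $(N,\alpha)$-expansion lies in the digit set $\mathcal{D}_{N,\alpha}$ (see the remark below on the one anomalous first digit), and by \eqref{def:K} each element of $\mathcal{D}_{N,\alpha}$ is coprime to $N$; hence $d_1d_2\cdots d_n$ is coprime to $N$, and so is $q_n(x)$. This proves the claim, and with it the observation.

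The one point that needs genuine care — and the only real subtlety — is the footnote convention: when $\tfrac{N}{\alpha}-\alpha\in\N$ the first digit of $x=\alpha$ is taken to be $\lfloor\tfrac{N}{\alpha}-\alpha\rfloor-1$ rather than $\lfloor\tfrac{N}{\alpha}-\alpha\rfloor$, and one must check this decremented value is still coprime to $N$ when $(N,\alpha)\in K$. This holds because the interval $\bigl[\tfrac{N}{\alpha+1}-\alpha,\ \tfrac{N}{\alpha}-\alpha\bigr]$ has length $\tfrac{N}{\alpha(\alpha+1)}>1$ for all $\alpha\in(0,\sqrt{N}-1]$ (as $\alpha(\alpha+1)\leq(\sqrt{N}-1)\sqrt{N}<N$), so $\mathcal{D}_{N,\alpha}$ always contains at least two consecutive integers and therefore $\lfloor\tfrac{N}{\alpha}-\alpha\rfloor-1\in\mathcal{D}_{N,\alpha}$. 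With that verified, the induction applies verbatim to every $x\in[\alpha,\alpha+1]$.
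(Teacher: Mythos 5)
Your proof is correct and follows essentially the same route as the paper: the determinant identity \eqref{eq:noncoprimeprod} reduces the claim to showing ${\rm gcd}\{q_n(x),N\}=1$, which follows by induction on the recurrence \eqref{eq:recforqn} using that every digit in $\mathcal{D}_{N,\alpha}$ is coprime to $N$. Your additional verification that the footnote's decremented first digit still lies in $\mathcal{D}_{N,\alpha}$ (because that set always has at least two elements) is a detail the paper's two-line argument glosses over, and you handle it correctly.
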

It follows from \eqref{eq:noncoprimeprod} that the only possible common divisors of $p_n(x)$ and $q_n(x)$ are also divisors of $N$.  From \eqref{eq:recforqn} we find by an induction argument that $q_n$ does not have any common divisors with $N$ for $n\geq 0$. Here we use that each digit is relatively prime with $N$.\smallskip\

Since $0\not\in [\alpha,\alpha +1]$ we have that the $(N,\alpha)$-expansion of any $x\in [\alpha,\alpha +1]$ is infinite and unique. Note that if there exist $k,\ell \geq 0$, $k\neq \ell$, and $T_{N,\alpha}^k(x) = T_{N,\alpha}^{\ell}(x)$, then the $(N,\alpha )$-expansion of $x$ is (eventually) periodic. The converse of this also holds.

\begin{proposition}\label{prop:nonperrat}
Let $(N,\alpha)\in K$ with $\alpha>1$. Then, no rational number in $[\alpha,\alpha+1]$ is periodic.
\end{proposition}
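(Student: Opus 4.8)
The plan is to argue by contradiction, following the denominators along the orbit. Suppose some rational $x_0\in[\alpha,\alpha+1]$ has an eventually periodic $(N,\alpha)$-expansion. Replacing $x_0$ by $T_{N,\alpha}^m(x_0)$ for a suitable $m$, we may assume $x_0$ is \emph{purely} periodic, say $T_{N,\alpha}^p(x_0)=x_0$ with $p\geq 1$; it is still rational. Write $x_k=T_{N,\alpha}^k(x_0)=t_k/s_k$ with $t_k,s_k\in\N$ and ${\rm gcd}\{t_k,s_k\}=1$, so that the pair $(t_k,s_k)$ — and in particular the sequence $(s_k)$ — is $p$-periodic in $k$.

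First I would record how the denominators evolve. Since $x_{k+1}=\frac{N}{x_k}-d_{k+1}=\frac{Ns_k-d_{k+1}t_k}{t_k}$ and ${\rm gcd}\{t_k,s_k\}=1$, the largest factor one can cancel from this fraction is ${\rm gcd}\{Ns_k-d_{k+1}t_k,\,t_k\}={\rm gcd}\{N,t_k\}=:g_k$, whence
\[
s_{k+1}=\frac{t_k}{g_k},\qquad t_{k+1}=\frac{Ns_k-d_{k+1}t_k}{g_k}.
\]
Because $\alpha>1$ every $x_k>1$, so $t_k>s_k$; because $x_k\leq\alpha+1\leq\sqrt N<N$ we have $t_k<Ns_k$.

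The heart of the proof is to pin down $g_k={\rm gcd}\{N,t_k\}$ around the $p$-cycle, and this is exactly where $(N,\alpha)\in K$ is needed. Since each digit $d_{k+1}$ is coprime to $N$, one has ${\rm gcd}\{N,d_{k+1}t_k\}={\rm gcd}\{N,t_k\}=g_k$; hence, if $g_k=1$, then from $t_{k+1}=Ns_k-d_{k+1}t_k\equiv-d_{k+1}t_k\pmod N$ we get $N\nmid t_{k+1}$. When $N$ is prime this forces $g_{k+1}=1$, and running the implication around the cycle yields the dichotomy: \emph{either} $g_k=1$ for every $k$, \emph{or} $g_k=N$ for every $k$. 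I expect proving this dichotomy — propagating coprimality of the $t_k$ with $N$ through a whole period — to be the main obstacle: it is the precise point where one uses $(N,\alpha)\in K$ rather than merely that $N$ divides no single digit, and in general it also requires the fact that ${\rm gcd}\{s_k,N\}=1$ along the cycle (which one can extract by substituting $T_{N,\alpha}^p(x_k)=x_k$ into $x=M_n(T_{N,\alpha}^n(x))$, reducing modulo $s_k$, and invoking the Observation that $q_n$ is coprime to $N$), together with $\ell$-adic bookkeeping for each prime $\ell\mid N$.

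Finally I would conclude by monotonicity. If $g_k=1$ for all $k$, then $s_{k+1}=t_k>s_k$, so $s_0<s_1<\dots<s_p=s_0$, a contradiction. If $g_k=N$ for all $k$, then $s_{k+1}=t_k/N<s_k$, so $s_0>s_1>\dots>s_p=s_0$, again a contradiction. Hence no rational number in $[\alpha,\alpha+1]$ is periodic.
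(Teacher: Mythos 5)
Your setup is sound and your endgame would work \emph{if} the claimed dichotomy held: the reduced recursion $s_{k+1}=t_k/g_k$, $t_{k+1}=(Ns_k-d_{k+1}t_k)/g_k$ with $g_k=\gcd\{N,t_k\}$ is correctly derived from $\gcd\{t_k,s_k\}=1$, the bounds $s_k<t_k<Ns_k$ are correct, and if every $g_k$ equals $1$ (resp.\ every $g_k$ equals $N$) the denominators strictly increase (resp.\ strictly decrease) around the cycle, which is absurd. The problem is that the dichotomy is exactly what you do not prove, and you say so yourself. Your argument establishes it only for $N$ prime, where $g_k\in\{1,N\}$ and $g_k=1\Rightarrow g_{k+1}=1$ propagates around the period; for prime $N$ your proof is essentially complete. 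But $K$ contains pairs with $N$ composite (e.g.\ $N=9$ or $N=15$ with $\mathcal{D}_{N,\alpha}=\{1,2\}$, which is precisely the situation used in the paper's corollary for odd $N\geq 9$). There $g_k$ can a priori be a proper divisor of $N$, in which case $s_{k+1}=t_k/g_k$ only satisfies $s_k/g_k<s_{k+1}<(N/g_k)s_k$, and neither branch of your concluding monotonicity applies. For $N$ a prime power one can still rescue the claim by valuations ($0<v_p(g_k)<v_p(N)$ forces $v_p(t_{k+1})=0$, so intermediate values cannot persist in a periodic orbit), but for $N$ with two distinct prime factors, say $N=15$, nothing you wrote excludes $g_k=3$ for every $k$. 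The auxiliary fact $\gcd\{s_k,N\}=1$ along the cycle is likewise only gestured at: the Observation in the paper concerns the convergent denominators $q_n$, not the orbit denominators $s_k$, and the bridge is not built (indeed $\gcd\{s_{k+1},N\}=\gcd\{t_k/g_k,N\}$ need not be $1$ before one knows the dichotomy). So the proof is incomplete precisely at the step you flag as the main obstacle.

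For comparison, the paper avoids the dichotomy by \emph{not} reducing the fractions: it keeps $t_{n+1}=Ns_n-d_{n+1}(x)t_n$ and $s_{n+1}=t_n$, so that $(t_n)$ is strictly increasing, and then shows (i) if $N\nmid t_0$ then $N\nmid t_n$ for all $n$, via $t_{n+1}\equiv -d_{n+1}t_n \ (\mathrm{mod}\ N)$ and $\gcd\{d,N\}=1$; (ii) no prime $p\nmid N$ can for a first time divide both $t_{k+1}$ and $s_{k+1}=t_k$, because $Nt_{k-1}=t_{k+1}+d_{k+1}t_k$ would force $p\mid t_{k-1}$; and (iii) a separate clean-up when $N$, or a nontrivial factor of $N$, divides $t_0$. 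If you want to keep your reduced-fraction formulation, the missing lemma is that in a periodic orbit every $g_k$ is $1$ or $N$, and it must be proved prime-by-prime for each $p\mid N$; the case of $N$ with several distinct prime divisors is where the real work lies.
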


\begin{proof}
Let $(N,\alpha)\in K$ with $\alpha>1$ and $x=t_0/s_0\in [\alpha ,\alpha +1]$
with $t_0,s_0\in\N$ be such, that ${\rm gcd}\{ t_0,s_0\}=1$. Now define recursively $t_{n+1}$ and $s_{n+1}$ for $n\geq 0$ by: 
\begin{equation}\label{eq:NumeratorsAndDenominators}
    t_{n+1} = Ns_n-d_{n+1}(x)t_n\quad \text{and}\,\, s_{n+1} = t_n.
\end{equation}
Clearly we have that $T_{N,\alpha}^n(\frac{t_0}{s_0})=\frac{t_n}{s_n}\in [\alpha ,\alpha +1)$, and since $\alpha>1$ yields that $t_{n+1}>s_{n+1}=t_n$, it follows that $(t_n)_{n\in\mathbb{N}}$ is a strictly increasing sequence. Note that here we do not demand that $t_n$ and $s_n$ are co-prime for $n\geq 1$. Now if $t_{n}$ and $s_{n}$ \emph{are} co-prime for every $n$ it immediately follows that the orbit of $x=t_0/s_0$ under $T_{N,\alpha}$ cannot be periodic. Left to show is when $t_n$ and $s_n=t_{n-1}$ are eventually not co-prime the $(N,\alpha$)-expansion of $x$ is still not periodic. To this end, we use the first statement in~\eqref{eq:NumeratorsAndDenominators}, reformulated as:
\begin{equation}\label{eq:NumeratorsAndDenominators2}
    Nt_{n-1} = t_{n+1}+d_{n+1}(x)t_n.
\end{equation}

First we let $(t_0,s_0)$ be such that $N$ is not a divisor of $t_0$ nor $s_0$. Then by~\eqref{eq:NumeratorsAndDenominators} and the definition of $K$ it immediately follows that $N$ is not a divisor of $t_1$ nor $s_1$ (in fact, if $N$ would be a divisor of $s_0$ this still would hold as $\text{gcd}\{ t_0,s_0\} = 1$ and $s_1=t_0$), and by induction we find that $N$ is not a divisor of $t_n$ nor $s_n$ for all $n\geq 1$. To see this, note that from~\eqref{eq:NumeratorsAndDenominators} we have that:
\[
t_{n+1}\, (\text{mod}\, N) \equiv Nt_{n-1}-d_{n+1}(x)t_n\, (\text{mod}\, N) \equiv -d_{n+1}(x)t_n\, (\text{mod}\, N),
\]
which implies that if $t_n$ is not divisible by $N$ then $t_{n+1}$ neither since all digits $d$ in our digit set $\mathcal{D}_{N,\alpha}$ are by definition~\eqref{def:K} of $K$ co-prime with $N$.

We will now show that for all $n\geq 1$ there is no prime $p\nmid N$ that divides $t_n$ and $s_n$. To see this, suppose this is not true. Let $k\geq 0$ be the \textit{first} time that some prime $p\nmid N$ is a divisor of both $t_{k+1}$ and $s_{k+1} (=t_{k})$ (and define $t_{-1}=s_0$). Then there exist $t_{k+1}^\prime$ and $t_k^\prime$ such that $t_{k+1}=pt_{k+1}^\prime$ and $s_{k+1}=t_k=pt_k^\prime $.
Then~\eqref{eq:NumeratorsAndDenominators2} yields that 
\[
 Nt_{k-1} = t_{k+1}+d_{k+1}(x)t_k = p(t_{k+1}^\prime +d_{k+1} t_k^\prime).
\]
Since $p\nmid N$ it follows that $t_{k-1} (= s_k)$ must be divisible by $p$ which leads to a contradiction with the definition of $k$.\medskip\

Second, suppose $t_0$ has $N$ as a factor; i.e., we can write $t_0=Nt_0^\prime$.
Then, since $s_1=t_0$,
$$
T_{N,\alpha}\left( \tfrac{t_{0}}{s_0}\right) = \frac{Ns_0-d_1t_0}{t_0} = \frac{Ns_0-d_1Nt_0^\prime}{Nt_0^\prime} = \frac{s_0-d_1t_0^\prime}{t_0^\prime}.
$$
Clearly for $k\geq 1$ each $t_k$ (and also $s_k$) has $N$ as a factor. In order to keep notation ``lean'' we redefine $t_1$ and $s_1$ from~\eqref{eq:NumeratorsAndDenominators} as $t_1 := s_0-d_1t_0^\prime$ and $s_1 := t_0^\prime$, which yields that $t_1 = s_0-d_1t_0^\prime<s_0<t_0$ where the second inequality follows from $\frac{t_0}{s_0}>1$. Let $t_2$ and $s_2$ be defined as in~\eqref{eq:NumeratorsAndDenominators}. If $t_1$ has $N$ as a factor we find that (after redefining $t_2$ and $s_2$ in the same way as we just redefined $t_1$ and $s_1$) that $t_2<t_1$. Repeating this process yields that $(t_n)_{n\in\mathbb{N}}$ is a sequence in $\mathbb{N}$ for which there must be an $k$ such that (after suitable re-definitions) $t_k$ does not have $N$ as a factor and we are back in the first case again. Since we defined in~\eqref{eq:NumeratorsAndDenominators} $s_{n+1} = t_n$ for all $n\geq 0$ we do not need to consider the case that $s_0$ has $N$ as a factor. As $\text{gcd}\{ t_0,s_0\} =1$ we cannot have that $N$ is a factor of both $t_0$ and $s_0$. So if $N|s_0$ we cannot have that $N|t_0$, and then we see that $N$ is not a divisor of $s_1$, as $s_1=t_0$. But then~\eqref{eq:NumeratorsAndDenominators} and the definition of $K$ yield that $N$ does not divide $t_1$.\smallskip\

Still it is possible that $N$ has a non-trivial factor $M$ which divides $t_0$ (or $s_0$): $M = {\rm gcd}\{ N,t_0\}$, with $M\not\in \{ 1,N\}$. Since $x = \tfrac{t_0}{s_0}$, where ${\rm gcd}\{t_0,s_0\}=1$, we also have that $x = \tfrac{Rt_0}{Rs_0}$, where $R=N/M$. So let us redefine $t_0$ and $s_0$ in such a way that $N$ is a factor of $t_0$, that $R$ is a factor of $s_0$, of course that $x=\tfrac{t_0}{s_0}$, and that ${\rm gcd}\{ t_0/R,s_0/R\} = 1$, where $R$ is a non-trivial factor of $N$. But then the above immediately yields (as we did not use there that ${\rm gcd}\{ t_0,s_0\} = 1$) that (after suitable re-definitions) there must be a positive integer $k$ for which $t_k$ does not have $N$ as a factor and we are back in the first case again.\medskip

\noindent We conclude that for $\alpha>1$ and $(N,\alpha)\in K$ no rational number in $[\alpha,\alpha+1]$ has a periodic expansion.
\end{proof}

The assumption $\alpha>1$ cannot be dropped. If $\alpha\leq 1$ then $1\in [\alpha,\alpha+1]$ and $1$ has a purely periodic expansion with period 1, having as expansion $1=[0;\overline{N-1}]_N$. Of course, all pre-images of $1$ are also (eventually) periodic.

\begin{remark}
{\rm In Proposition \ref{prop:FiniteExpansion} we have seen that for small $\alpha$, rationals are eventually periodic with period-length 1, and where the period consists of only the digit 1. There are more rational numbers that are fixed points of the map $T_{N,\alpha}$ for some $N,\alpha$. For example $2=[0;\overline{1}]_6=[0;\overline{2}]_8$ (or in general for $d\in\N$: $2=[0;\overline{d}]_{2d+4}$). These are points that are solutions to the equation $\frac{N}{x}-d=x$ so of the form $\frac{-d+\sqrt{d^2+4N}}{2}$. A natural question is: for a fixed pair $(N,\alpha)$, are rationals either eventually periodic with period 1 or non-periodic? This is not the case as other periods can be observed. For example: for $N=3$ and $\alpha=0.73$ we have that $\frac{40}{33}$ is eventually periodic with a pre-period of length 63 and a period length of 38. This is found by computing the orbit exactly with a script in {\tt R}. It is also possible to find smaller periods. For $N=9$ we have $2=[0;\overline{3,4}]_9$ and $\frac{3}{2}=[0;\overline{4,3}]_9$ which are the expansions of $2$ resp.\ $\tfrac{3}{2}$ when $\alpha=1.49$ for example.}
\end{remark}

\begin{proposition}\label{prop:nomatchingrat}
Let $(N,\alpha)\in K$ such that $\alpha=t_0/s_0\in\mathbb{Q}$ with ${\rm gcd}\{ t_0,s_0\} = 1$ and $t_0$ and $t_0+s_0$ are not divisible by $N$. Then there is no matching for $\alpha$.
\end{proposition}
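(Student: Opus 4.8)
The plan is to prove the stronger fact that $T_{N,\alpha}^{K}(\alpha)\neq T_{N,\alpha}^{L}(\alpha+1)$ for \emph{all} $K,L\geq 0$; since matching for $\alpha$ requires such an equality, this yields the statement. Assume towards a contradiction that $T_{N,\alpha}^{K}(\alpha)=T_{N,\alpha}^{L}(\alpha+1)$ for some $K,L$, so that the forward $T_{N,\alpha}$-orbits of $\alpha$ and of $\alpha+1$ eventually coincide. As in~\eqref{eq:NumeratorsAndDenominators}, write $T_{N,\alpha}^{n}(\alpha)=t_n/t_{n-1}$ with $t_{-1}=s_0$ and $t_{n+1}=Nt_{n-1}-d_{n+1}(\alpha)t_n$; since $\gcd\{t_0+s_0,s_0\}=1$ we may likewise write $T_{N,\alpha}^{n}(\alpha+1)=u_n/u_{n-1}$ with $u_{-1}=s_0$, $u_0=t_0+s_0$ and $u_{n+1}=Nu_{n-1}-e_{n+1}u_n$, where $e_i:=d_i(\alpha+1)$. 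Because $\alpha>1$, every orbit point exceeds $1$, so $(t_n)_{n\geq-1}$ and $(u_n)_{n\geq-1}$ are strictly increasing sequences of positive integers whose first two terms are $s_0<t_0$ and $s_0<t_0+s_0$ respectively.

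Reducing the recurrences modulo $N$ and using that every element of $\mathcal{D}_{N,\alpha}$ is coprime to $N$ gives $t_n\equiv(-1)^{n}d_1\cdots d_n\,t_0$ and $u_n\equiv(-1)^{n}e_1\cdots e_n(t_0+s_0)\pmod N$ for $n\geq0$; the induction used in the proof of Proposition~\ref{prop:nonperrat} shows moreover that $\gcd\{t_n,t_{n-1}\}$ and $\gcd\{u_n,u_{n-1}\}$ have no prime factor coprime to $N$. Since $N\nmid t_0$ and $N\nmid(t_0+s_0)$, no reduced numerator or denominator along either orbit is divisible by $N$, and when $N$ is prime these gcd's equal $1$, so $t_n/t_{n-1}$ and $u_n/u_{n-1}$ are already in lowest terms.

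Now take a merge point $(K,L)$ with $K$, and then $L$, as small as possible, and put $y:=T_{N,\alpha}^{K}(\alpha)=T_{N,\alpha}^{L}(\alpha+1)\in[\alpha,\alpha+1)$. First $L\geq1$, for otherwise $T_{N,\alpha}^{K}(\alpha)=\alpha+1\notin[\alpha,\alpha+1)$ unless also $K=0$, which is impossible. Suppose $K\geq1$. By minimality of $K$, the point $T_{N,\alpha}^{K-1}(\alpha)$ does not lie on the orbit of $\alpha+1$, whereas $T_{N,\alpha}^{L-1}(\alpha+1)$ does; both are $T_{N,\alpha}$-preimages of $y$, and since a preimage of $y$ has the form $N/(y+d)$ with $d$ its first digit, distinct preimages carry distinct first digits — hence $d_K\neq e_L$. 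But $y=T_{N,\alpha}^{K}(\alpha)=T_{N,\alpha}^{L}(\alpha+1)$ reads $t_Ku_{L-1}=t_{K-1}u_L$; comparing this modulo $N$ and cancelling the unit $d_1\cdots d_{K-1}\,e_1\cdots e_{L-1}$ leaves
\begin{equation*}
t_0\,(t_0+s_0)\,(d_K-e_L)\equiv0\pmod N .
\end{equation*}
Since all digits are coprime to $N$, $\mathcal{D}_{N,\alpha}$ is an interval of consecutive integers of length at most $p-1$, where $p$ is the least prime factor of $N$, so $|d_K-e_L|<p$. With $M=\gcd\{t_0,N\}$ and $M'=\gcd\{t_0+s_0,N\}$ (coprime, because $\gcd\{t_0,s_0\}=1$ forces $\gcd\{t_0,t_0+s_0\}=1$), the congruence gives $\tfrac{N}{MM'}\mid(d_K-e_L)$; if $MM'<N$, then $\tfrac{N}{MM'}$ is a divisor of $N$ larger than $1$, hence $\geq p>|d_K-e_L|$, forcing $d_K=e_L$ — a contradiction. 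Therefore $K=0$, i.e.\ $\alpha=T_{N,\alpha}^{L}(\alpha+1)$ with $L\geq1$; then $t_0/s_0$ is the reduction of $u_L/u_{L-1}$, so $s_0\mid u_{L-1}$, and when $N$ is prime $u_L/u_{L-1}$ is already reduced, giving $s_0=u_{L-1}=u_{-1}$ — impossible, as $(u_n)_{n\geq-1}$ strictly increases and $L-1\geq0$. This settles the case $N$ prime.

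The main obstacle is the composite case, namely when $MM'=N$ — which genuinely occurs (e.g.\ $N=15$, $\alpha=48/17$, with $3\mid t_0$, $5\mid t_0+s_0$ and $\mathcal{D}_{N,\alpha}=\{1,2\}$), making the displayed congruence vacuous — and, more generally, whenever $t_n/t_{n-1}$ and $u_n/u_{n-1}$ fail to be in lowest terms, so that the ``distinct first digits'' step and the final monotonicity step do not apply verbatim. I expect this to be handled by tracking the valuations $v_p(t_n)$ and $v_p(u_n)$ for the primes $p\mid N$ and re-scaling the two orbits — precisely the bookkeeping performed in the proof of Proposition~\ref{prop:nonperrat} — to reduce to the situation in which the orbit numerators are genuinely coprime to $N$, where the argument above applies.
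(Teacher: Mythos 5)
Your core argument --- reduce the numerator recursion modulo $N$ to get $t_n\equiv(-1)^n d_1\cdots d_n t_0$, locate a first coincidence of the two orbits, observe that the two preimages of the common point must carry distinct digits, and kill the resulting congruence using the fact that two digits of $\mathcal{D}_{N,\alpha}$ differ by less than the least prime factor of $N$ --- is exactly the paper's argument in a slightly different packaging (the paper writes the common point as $p/q$ in lowest terms and subtracts the two congruences $d_{i+1}q+p\equiv 0$ and $d_{j+1}q+p\equiv 0 \pmod N$; your cross-multiplied identity $t_Ku_{L-1}=t_{K-1}u_L$ yields the same relation). For $\gcd\{t_0,N\}=\gcd\{t_0+s_0,N\}=1$, and in particular for $N$ prime, your proof is correct.

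The case you leave open is, however, a genuine gap relative to the statement as given: the hypothesis is only that $N$ does not \emph{divide} $t_0$ and $t_0+s_0$, so $M=\gcd\{t_0,N\}$ and $M'=\gcd\{t_0+s_0,N\}$ can be proper divisors with $MM'=N$ (your $N=15$, $\alpha=48/17$ example is admissible, since $(15,\alpha)\in K$ there with $\mathcal{D}_{15,\alpha}=\{1,2\}$), and then your congruence says nothing. The proposed fix by ``valuation bookkeeping'' is not a proof and is not obviously one: the obstruction is not merely that $t_n/t_{n-1}$ fails to be reduced, but that the relation $t_n\equiv \pm d_1\cdots d_n t_0\pmod N$ genuinely carries information only modulo $N/M$ once the common factor $M$ is stripped off, so the two congruences at the merge point live modulo $N/M$ and $N/M'$, whose gcd is $1$ precisely when $MM'=N$. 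You should be aware that the paper's own proof is terse at exactly this point: from ``$t_i$ and $t_j^+$ are not divisible by $N$'' it asserts both congruences modulo the full $N$, which as written is only justified when $M=M'=1$. So you have isolated the real crux rather than missed an idea the paper supplies. Two smaller remarks: your opening monotonicity claim and your $K=0$ endgame assume $\alpha>1$, which is not a hypothesis of the proposition (pairs $(N,\alpha)\in K$ with $\alpha<1$ exist, e.g.\ $N=5$ with $\alpha$ slightly below $1$); and the paper sidesteps the $K=0$ case entirely by taking the first coincidence at positive indices on \emph{both} orbits and comparing the two preimages of the common point there, which you may want to adopt.
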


\begin{proof}
Define $x_n= \frac{t_n}{s_n}=T_{N,\alpha}^n(\frac{t_0}{s_0})$ and $x_n^+=\frac{t_n^+}{s_n^+}=T_{N,\alpha}^n(\frac{t_0}{s_0}+1)$. Suppose that there is matching. Then there are minimal $i,j$ such that $\frac{t_{i+1}}{s_{i+1}}=\frac{t_{j+1}^+}{s_{j+1}^+} =: \frac{p}{q}$, with ${\text{gcd}}\{ p,q\} = 1$.
We find $x_i=\frac{N q}{d_{i+1}(\alpha)q+p}$ and $x_j^+=\frac{N q}{d_{j+1}(\alpha+1)q+p}$. From the proof of Proposition~\ref{prop:nonperrat} and our assumption on $t_0$ and $t_0+s_0$  we find that $t_i$ and $t_j^+$ are not divisible by $N$. Therefore we must have that 
\begin{align}
    d_{i+1}(\alpha)q+p &\equiv 0\,\, (\text{mod $N$})\label{eq:0modNfirst}\\
     d_{j+1}(\alpha+1)q+p &\equiv 0\,\, (\text{mod $N$}).\label{eq:0ModNsecond} 
\end{align}
Now due to the definition of $K$ neither $d_{i+1}(\alpha)$ nor $d_{j+1}(\alpha+1)$ has a divisor in common with $N$; in fact both $d_{i+1}(\alpha)$ and $d_{j+1}(\alpha+1)$ are smaller than $N$.
By minimality of $i$ and $j$ we must have that $d_{i+1}(\alpha) \neq d_{j+1}(\alpha+1)$, so without loss of generality we may assume that $d_{i+1}(\alpha)>d_{j+1}(\alpha+1)$. Then $d = d_{i+1}(\alpha) - d_{j+1}(\alpha+1)\in\{ 1,2,\dots, N-2\}$ and by subtracting~\eqref{eq:0ModNsecond} from~\eqref{eq:0modNfirst} we find $dq\equiv 0 \mod N$, which contradicts with our assumptions since $N$ is relative prime with $d$ and is not a divisor of $q$.
\end{proof}

\begin{corollary}
For $N\geq 5$ and odd, there are intervals of $(0,\sqrt{N}-1]$ that do not contain any matching interval.
\end{corollary}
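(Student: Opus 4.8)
The plan is to exhibit, for each odd $N\ge 5$, an explicit subinterval $I$ of $(1,\sqrt N-1]$ on which the entire digit set satisfies $\mathcal D_{N,\alpha}=\{1,2\}$. Since $N$ is odd, both $1$ and $2$ are coprime to $N$, so $(N,\alpha)\in K$ for every $\alpha\in I$; and since $\alpha>1$ on $I$, every rational $\alpha=t_0/s_0\in I$ with ${\rm gcd}\{t_0,s_0\}=1$, $N\nmid t_0$ and $N\nmid(t_0+s_0)$ satisfies the hypotheses of Proposition~\ref{prop:nomatchingrat}, hence has no matching. Such rationals will turn out to be dense in $I$, which by Definition~\ref{def:match} forces $I$ to be disjoint from every matching interval, and in particular to contain none.

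To build $I$, put $g(\alpha)=\tfrac N{\alpha+1}-\alpha$ and $h(\alpha)=\tfrac N\alpha-\alpha$, so that $\mathcal D_{N,\alpha}=\{\lfloor g(\alpha)\rfloor,\dots,\lfloor h(\alpha)\rfloor\}$ and both $g,h$ are strictly decreasing on $(0,\infty)$. A direct computation gives $g(\sqrt N-1)=1$ and $h(\sqrt N-1)=\tfrac{2\sqrt N-1}{\sqrt N-1}=2+\tfrac1{\sqrt N-1}$, the latter lying strictly between $2$ and $3$ precisely when $\sqrt N-1>1$, i.e.\ $N\ge5$. Let $a=\tfrac{\sqrt{4N+9}-3}{2}$ be the unique positive solution of $h(\alpha)=3$; then $1<a<\sqrt N-1$ for all $N\ge5$, as these inequalities reduce to $25<4N+9$ and $\sqrt{4N+9}<2\sqrt N+1$. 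Set $I:=(a,\sqrt N-1)$. On $I$ one has $h(\alpha)\in(2,3)$, so $\lfloor h(\alpha)\rfloor=2$; moreover $h(\alpha)\notin\N$ throughout $I$, so the footnote convention modifying $d_1(\alpha)$ never triggers. Since $g$ is decreasing with $g(\sqrt N-1)=1$ and $g(a)=\tfrac{2a}{a+1}<2$ (using $N=a^2+3a$), we get $\lfloor g(\alpha)\rfloor=1$ on all of $I$. Hence $\mathcal D_{N,\alpha}=\{1,2\}$ for every $\alpha\in I$, and $N$ odd yields $(N,\alpha)\in K$.

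Finally, the reduced fractions $t_0/s_0$ with $N\nmid t_0$ and $N\nmid(t_0+s_0)$ are dense in $\mathbb R$: inside any open interval, and for a prime $q>N$, a proportion tending to $1-\tfrac2N>0$ as $q\to\infty$ (using $N\ge5$) of the reduced fractions with denominator $q$ lying in the interval satisfy both congruence conditions modulo $N$, so such a fraction exists once $q$ is large. Consequently every nonempty open subinterval $J$ of $I$ contains some $\alpha=t_0/s_0$ with ${\rm gcd}\{t_0,s_0\}=1$, $N\nmid t_0$, $N\nmid(t_0+s_0)$; since $\alpha\in I$ we have $(N,\alpha)\in K$, so Proposition~\ref{prop:nomatchingrat} shows matching fails at $\alpha$. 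If a matching interval $(c,d)$ met $I$, then $(c,d)\cap I$ would be a nonempty open interval on which matching holds at every parameter, yet which contains such an $\alpha$ --- a contradiction. Hence $I$ is disjoint from every matching interval, which proves the corollary.

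The argument is entirely elementary; the only points requiring attention --- the closest thing to a genuine obstacle --- are the floor-function bookkeeping that pins $\mathcal D_{N,\alpha}$ to $\{1,2\}$ across the whole of $I$ (in particular the nonemptiness of $I$, which is exactly where $N\ge5$ is used, together with verifying that the $N/\alpha-\alpha\in\N$ convention is vacuous on $I$) and the routine density statement, for which the oddness of $N$ is essential: it is precisely what makes the digit $2$ coprime to $N$ and hence keeps all of $I$ inside $K$.
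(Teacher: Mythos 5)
Your proof is correct and follows essentially the same route as the paper: exhibit an interval near $\sqrt{N}-1$ on which $\mathcal{D}_{N,\alpha}=\{1,2\}$ (your $a=\frac{\sqrt{4N+9}-3}{2}$ is exactly the paper's $\xi$ defined by $\xi=\frac{N}{3+\xi}$), so that $(N,\alpha)\in K$ there, and then invoke the density of rationals satisfying the hypotheses of Proposition~\ref{prop:nomatchingrat}. The only differences are cosmetic improvements: you treat $N=5,7$ uniformly with $N\geq 9$ instead of splitting cases, and you actually justify the density claim that the paper merely asserts.
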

\begin{proof}
For $N=5$ and $N=7$ we have that if $\alpha\in(1,\sqrt{N}-1]$ then $(N,\alpha)\in K$. The rationals that satisfy the conditions in Proposition \ref{prop:nomatchingrat} are dense in this interval. Now let $N\geq 9$ and odd and $\xi$ be such that $0 < \xi =\frac{N}{3+\xi}$. Then for all $\alpha\in(\xi,\sqrt{N}-1]$ we have that $\mathcal{D}_{N,\alpha}=\{1,2\}$. On this interval we also have that the rationals that satisfy the conditions in Proposition \ref{prop:nomatchingrat} are dense in this interval and therefore we cannot find a matching interval in it.
\end{proof}

\begin{proposition}
For $(N,\alpha)\in K$ and $N$ odd, for any quadratic irrational $x_0\in[\alpha,\alpha+1]$ such that ${\rm gcd}\{ C_0,N\} = 1$, where $C_0$ is from~\eqref{eq:quadirrstart}, we have that $x_0$ is not (eventualy) periodic.
\end{proposition}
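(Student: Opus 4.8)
The plan is to track the divisibility by $N$ of the constant term $C_n$ of the quadratic equation \eqref{eq:quadirrn} satisfied by $x_n := T_{N,\alpha}^n(x_0)$, and then to play this off against the discriminant identity \eqref{eq:det}. First, I would prove by induction on $n$ that $\gcd\{C_n,N\}=1$ for every $n\ge 0$. The base case is the hypothesis $\gcd\{C_0,N\}=1$. For the inductive step, reduce the recurrence \eqref{eq:recC_n} modulo $N$: since $N^2A_n\equiv 0$ and $NB_nd_{n+1}\equiv 0\pmod N$, one gets $C_{n+1}\equiv C_nd_{n+1}^2\pmod N$; as $(N,\alpha)\in K$ forces $\gcd\{d_{n+1},N\}=1$ by~\eqref{def:K} and $\gcd\{C_n,N\}=1$ by the induction hypothesis, it follows that $\gcd\{C_{n+1},N\}=1$. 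In particular every $C_n$, and hence via \eqref{eq:recA_n} every $A_n$ with $n\ge 1$, is nonzero.

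Next, I would assume for contradiction that the $(N,\alpha)$-expansion of $x_0$ is eventually periodic. By the observation recorded just before Proposition~\ref{prop:nonperrat}, this yields integers $0\le k<\ell$ with $x_k = x_\ell$. Because $x_0$ is irrational and $x\mapsto \tfrac{N}{x}-d$ preserves irrationality, the common value $x_k=x_\ell$ is an irrational root of the two degree-two integer polynomials $f_k(X)=A_kX^2+B_kX+C_k$ and $f_\ell(X)=A_\ell X^2+B_\ell X+C_\ell$; hence it is a quadratic irrational with a unique primitive minimal polynomial $m(X)=aX^2+bX+c\in\mathbb{Z}[X]$ ($a>0$) of degree exactly two. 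Setting $g_n=\gcd\{|A_n|,|B_n|,|C_n|\}$, both $f_k$ and $f_\ell$ are nonzero polynomials of degree two divisible in $\mathbb{Q}[X]$ by $m$, hence rational multiples of $m$; a comparison of contents (Gauss's lemma) then gives $(A_k,B_k,C_k)=\pm g_k\,(a,b,c)$ and $(A_\ell,B_\ell,C_\ell)=\pm g_\ell\,(a,b,c)$, so $B_k^2-4A_kC_k=g_k^2(b^2-4ac)$ and $B_\ell^2-4A_\ell C_\ell=g_\ell^2(b^2-4ac)$.

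Finally, I would feed in \eqref{eq:det}: with $\Delta_0:=B_0^2-4A_0C_0$ (nonzero, since $x_0$ is a genuine quadratic irrational, so $b^2-4ac\ne 0$ too), this gives $g_k^2(b^2-4ac)=N^{2k}\Delta_0$ and $g_\ell^2(b^2-4ac)=N^{2\ell}\Delta_0$, and dividing one of these by the other yields $g_\ell^2=N^{2(\ell-k)}g_k^2$, i.e.\ $g_\ell=N^{\ell-k}g_k$. Since $\ell-k\ge 1$, this forces $N\mid g_\ell$, hence $N\mid C_\ell$, contradicting $\gcd\{C_\ell,N\}=1$ from the first step. Therefore no such $k,\ell$ exist and the $(N,\alpha)$-expansion of $x_0$ is not eventually periodic.

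The argument is short, and I do not expect a genuine obstacle; the one spot deserving care is the passage from ``$f_k,f_\ell$ share the irrational root $x_k$'' to ``$f_k=\pm g_k\,m$ and $f_\ell=\pm g_\ell\,m$'', which rests on Gauss's lemma together with the fact that $x_k$ has degree exactly two over $\mathbb{Q}$ (so that $f_k$ and $f_\ell$ are bona fide scalar multiples of the minimal polynomial, not accidentally reducible). I would also note that the hypothesis ``$N$ odd'' is never used above — only $(N,\alpha)\in K$ and $\gcd\{C_0,N\}=1$ enter the proof — and is in any case automatic for $(N,\alpha)\in K$, since $\mathcal{D}_{N,\alpha}$ always consists of at least two consecutive integers and such a block must contain an even number.
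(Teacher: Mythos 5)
Your proof is correct, and it takes a genuinely leaner route than the paper's. The paper first establishes full primitivity ${\rm gcd}\{A_n,B_n,C_n\}=1$ for every $n$: a ``first time'' descent argument for primes $p\nmid N$, plus an induction for primes $p\mid N$ --- and the latter is exactly where the hypothesis that $N$ is odd enters, to guarantee $B_{n+1}\equiv 2d_{n+1}C_n\not\equiv 0 \pmod p$. It then concludes rather tersely that, since \eqref{eq:det} forces the coefficient triples to be unbounded, the pigeonhole argument of the Lagrange proof fails and the orbit is not periodic. You instead track only ${\rm gcd}\{C_n,N\}=1$, a one-line induction from $C_{n+1}\equiv C_nd_{n+1}^2\pmod N$, and you make the endgame fully explicit: periodicity gives $x_k=x_\ell$ with $k<\ell$, both quadratics are $\pm g_n m$ for the primitive minimal polynomial $m$ by Gauss's lemma, and comparing discriminants via \eqref{eq:det} forces $g_\ell=N^{\ell-k}g_k$, hence $N\mid C_\ell$, a contradiction. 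This buys two things. First, the conclusion genuinely follows: the paper's appeal to the failing pigeonhole principle is only a heuristic unless one adds precisely your minimal-polynomial/discriminant comparison, which is implicitly what the authors intend once primitivity is known. Second, you correctly observe that ``$N$ odd'' is never used --- and it is in any case automatic for $(N,\alpha)\in K$, since $N/(\alpha(\alpha+1))>1$ for $\alpha\le\sqrt N-1$ forces $\mathcal{D}_{N,\alpha}$ to contain at least two consecutive integers and hence an even one. The paper's stronger primitivity statement (for all primes, not just divisors of $N$) is not needed for this proposition.
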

\begin{proof}
Using the recurrence relations~\eqref{eq:recA_n}, \eqref{eq:recB_n} and~\eqref{eq:recC_n} for $A_n,B_n$ and $C_n$ we prove that they are co-prime. First we prove that $p$ does not divide $A_n,B_n$ and $C_n$ when $p$ prime is not equal to $N$ or a divisor of it. Suppose the contrary. Then there is an $n$ such that $A_{n+1},B_{n+1}$ and $C_{n+1}$ have $p$ as a divisor for the first time. Write
$A_{n+1}=p\widehat{A}_{n+1},B_{n+1}=p\widehat{B}_{n+1}$ and $C_{n+1}=p\widehat{C}_{n+1}$. Then using~\eqref{eq:recA_n}, \eqref{eq:recB_n} and~\eqref{eq:recC_n} we find
\begin{align}
    A_{n+1}=&\,\, p\widehat{A}_{n+1}=C_n & \text{gives } \quad C_n\equiv 0\,\, (\text{mod $p$}),\\
    B_{n+1}=&\,\, p\widehat{B}_{n+1}=NB_n+2d_nC_n & \text{gives } \quad  B_n\equiv 0\,\, (\text{mod $p$}),\\
    C_{n+1}=&\,\, p\widehat{C}_{n+1}=N^2A_n+NB_nd_n+C_nd_n^2 & \text{gives } \quad  A_n\equiv 0\,\, (\text{mod $p$}).
\end{align}
This is in contradiction with $n+1$ being the \emph{first} time that $A_{n+1},B_{n+1}$ and $C_{n+1}$ have $p$ as a divisor. Now let $p$ be equal to $N$ or a divisor of $N$. When $C_n\not \equiv 0\,\, (\text{mod $p$})$ we find by using~\eqref{eq:recA_n} that 
\[
A_{n+1}\not \equiv 0\,\, (\text{mod $p$}).
\]
By using \eqref{eq:recB_n} we find
\[
B_{n+1}\,\, (\text{mod $p$}) \equiv NB_n+2d_nC_n \not\equiv 0\,\, (\text{mod $p$}).
\]
Here we used that $N$ is odd and relatively prime with $d_n$ by definition of $K$. From~\eqref{eq:recC_n} we find
\[
C_{n+1}\,\, (\text{mod $p$}) \equiv N^2A_n+NB_nd_n+C_nd_n^2 \not\equiv 0\,\, (\text{mod $p$}) ,
\]
again by using that $d_n$ and $N$ are relatively prime.
Now by induction and our assumptions it follows that $p$ does not divide $A_n,B_n$ or $C_n$ for any $n\geq 1$. Now by using~\eqref{eq:det} we find that the sequences $(A_n)_{n\in\mathbb{N}},(B_n)_{n\in\mathbb{N}}$ or $(C_n)_{n\in\mathbb{N}}$ cannot be all bounded. Therefore the pigeon hole principle that is used in the proof for the regular continued fraction fails (cf.~\cite{HW}), and, in fact, we find that $x_0$ does not have a periodic orbit.
\end{proof}

\section{The case of $N=2$.}\label{sec:N2}
We will now take a closer look at the case of $N=2$. In this case, we know from Corollary \ref{cor:N2ratfinorbit} that  when we take $\alpha$ rational the orbits of $\alpha$ and $\alpha+1$ match. We will investigate when this matching is stable (find a matching interval that contains $\alpha$ on which we have the same matching exponents). We return to the study of M\"obius transformations.
Let $M_{\alpha,x,k}$ be the M\"obius transformation belonging to $x$ when iterated $k$ times over $T_{2,\alpha}$; cf.~(\ref{eq:Mxalphan}). Since the entries of the matrix are not necessarily co-prime we write $M\sim D$ for two matrices when they represent the same M\"obius transformation.
Now let
\[
R=
\left(\begin{matrix}
1 & 1 \\
0 & 1
\end{matrix}\right).
\]
We have the following proposition:
\begin{proposition}\label{prop:matchrel}
Let $x\in(0,\sqrt{2}-1]\cap \mathbb{Q}$. Then $x$ is contained in a matching interval with exponents $K,L$ if and only if
$RM_{x,x,K}\sim M_{x,x+1,L}$; see \eqref{eq:Mxalphan}.
\end{proposition}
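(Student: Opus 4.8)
The plan is to reformulate matching entirely through M\"obius transformations, using two facts. First, identity~\eqref{eq:xasMnTn} says $\alpha' = M_{\alpha',\alpha',n}\big(T_{2,\alpha'}^n(\alpha')\big)$ for all $n$, hence $T_{2,\alpha'}^n(\alpha') = M_{\alpha',\alpha',n}^{-1}(\alpha')$, and applied to the right endpoint it gives $T_{2,\alpha'}^m(\alpha'+1) = M_{\alpha',\alpha'+1,m}^{-1}(\alpha'+1) = M_{\alpha',\alpha'+1,m}^{-1}\big(R(\alpha')\big)$, since $R(\alpha')=\alpha'+1$. Second, two M\"obius transformations agreeing on an interval coincide. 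I will also use that, because $x$ is rational (so $x<1$ and $1\in[x,x+1]$), the digit prefixes $d_1(\alpha'),\dots,d_K(\alpha')$ of the $T_{2,\alpha'}$-orbit of $\alpha'$ and $d_1(\alpha'+1),\dots,d_L(\alpha'+1)$ of that of $\alpha'+1$ are \emph{locally constant} at $\alpha'=x$; this is the one delicate point, discussed last.

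For the ``if'' direction, assume $RM_{x,x,K}\sim M_{x,x+1,L}$. As an identity of M\"obius maps this reads $M_{x,x,K}^{-1}=M_{x,x+1,L}^{-1}\circ R$; evaluating at $x$ and using the identities above gives $T_{2,x}^K(x)=M_{x,x,K}^{-1}(x)=M_{x,x+1,L}^{-1}(x+1)=T_{2,x}^L(x+1)$, so matching holds at $x$. By local constancy of the prefixes, the matrices $M_{\alpha',\alpha',K}$ and $M_{\alpha',\alpha'+1,L}$ are constant for $\alpha'$ near $x$, equal to $M_{x,x,K}$ and $M_{x,x+1,L}$; hence the same computation with $\alpha'$ in place of $x$ gives $T_{2,\alpha'}^K(\alpha')=T_{2,\alpha'}^L(\alpha'+1)$ throughout a neighbourhood of $x$, which therefore lies in a matching interval with exponents $K,L$.

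For the ``only if'' direction, suppose $x$ lies in a matching interval with exponents $K,L$. By local constancy, $M:=M_{\alpha',\alpha',K}$ and $M':=M_{\alpha',\alpha'+1,L}$ are constant on some neighbourhood $I$ of $x$. For every $\alpha'\in I$, matching gives $M^{-1}(\alpha')=T_{2,\alpha'}^K(\alpha')=T_{2,\alpha'}^L(\alpha'+1)=M'^{-1}\big(R(\alpha')\big)$, so the M\"obius maps $M'\circ M^{-1}$ and $R$ agree on the interval $I$ and hence coincide. Thus $M'\sim RM$, i.e.\ $M_{x,x+1,L}\sim R\,M_{x,x,K}$.

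The main obstacle is the local constancy of the digit prefixes at $\alpha'=x$, i.e.\ that $\alpha'\mapsto T_{2,\alpha'}^{j}(\alpha')$ ($j<K$) and $\alpha'\mapsto T_{2,\alpha'}^{j}(\alpha'+1)$ ($j<L$) do not jump at $x$. For the left-endpoint orbit this follows from Corollary~\ref{cor:N2ratfinorbit}: an interior discontinuity of $T_{2,x}$ is a point $p$ with $2/p-x\in\N$, and $T_{2,x}^{j}(x)=p$ would force $T_{2,x}^{j+1}(x)=x$, making the $T_{2,x}$-orbit of $x$ purely periodic and so unable to reach the fixed point $1$ --- impossible. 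The genuinely awkward configurations are (i) $x$ itself a ``critical parameter'' $2/x-x\in\N$ (the case set aside in the footnote defining $T_{N,\alpha}$), and (ii) the orbit of $x+1$ meeting an interior discontinuity before step $L$ --- which, unlike for the left endpoint, is not excluded, as it merely sends the point to the left endpoint $x$. In exactly these cases stability fails and $RM_{x,x,K}\not\sim M_{x,x+1,L}$: these are the ``bad rationals'' of Section~\ref{sec:N2}, and checking that the equivalence degenerates consistently in these exceptional configurations is where the real work lies.
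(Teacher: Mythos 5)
Your overall strategy coincides with the paper's: both directions rest on~\eqref{eq:xasMnTn}, on the local constancy of the digit prefixes of $\alpha'$ and of $\alpha'+1$ at $\alpha'=x$, and on the fact that two M\"obius transformations agreeing on an interval coincide. Your treatment of the left-endpoint orbit is exactly the paper's argument (hitting a cylinder endpoint would make the orbit of $x$ purely periodic, which is incompatible with Corollary~\ref{cor:N2ratfinorbit} since $x\neq 1$). Your worry (i) is in fact vacuous: $2/x-x=d\in\N$ with $x$ rational forces $d^2+8$ to be a perfect square, hence $d=1$ and $x=1$, which lies outside $(0,\sqrt{2}-1]$.

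The genuine gap is your case (ii), which you explicitly leave open (``where the real work lies''): the orbit of $x+1$ meeting a cylinder endpoint at some step before matching. The paper closes this case with a short argument that you are missing. Suppose $T_{2,x}^{\,j}(x+1)=x$ for some $1\le j\le L$. Then matching already occurs with exponents $(0,j)$, and since the later digits of $x+1$ are those of $x$, one has $M_{x,x+1,L}=M_{x,x+1,j}\,M_{x,x,L-j}$ and $T_{2,x}^{L}(x+1)=T_{2,x}^{L-j}(x)$; combined with $T_{2,x}^{K}(x)=T_{2,x}^{L}(x+1)$ and the non-periodicity of the orbit of $x$ this forces $K=L-j$, so the relation $RM_{x,x,K}\sim M_{x,x+1,L}$ reduces (cancelling the invertible factor $M_{x,x,K}$ on the right) to $R\sim M_{x,x+1,j}$. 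This is impossible: by~\eqref{Mn} the lower-left entry of $M_{x,x+1,j}$ is $q_{j-1}\ge 1$ for $j\ge 1$, while the lower-left entry of $R$ is $0$, so the matrices cannot be proportional. Hence this configuration is incompatible both with the hypothesis of the ``if'' direction and, via the same computation applied to the stable matching relation on a half-neighbourhood, with $x$ lying in a matching interval; local constancy of the prefix of $x+1$ therefore holds under either hypothesis. Your closing assertion that ``in exactly these cases stability fails and $RM_{x,x,K}\not\sim M_{x,x+1,L}$'' is precisely the statement that needs this proof, not a substitute for it, so as written your argument does not establish either implication for rationals whose right-endpoint orbit passes through the left endpoint.
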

\begin{proof}
($\Rightarrow$) Suppose $x$ is contained in a matching interval with exponents $K,L$. Then we have $RM_{x,x,K}(T_{2,x}^K(x))= M_{x,x+1,L}(T_{2,x}^L(x+1))$; cf.~(\ref{eq:xasMnTn}). For $x^\prime$ sufficiently close  to $x$, we have  $M_{x^\prime,x^\prime,K}=M_{x,x,K}$ and $M_{x^\prime,x^\prime+1,L}=M_{x,x+1,L}$. This holds for the following reason. Suppose there is no such neighborhood. Then the orbit of $x$ or $x+1$ has hit a discontinuity point before or at matching. If the orbit of $x+1$ hits a discontinuity point we find that matching happens with exponents $(0,L)$. In this case $RM_{x,x,K}=R$ which can never represent the same M\"obius transformation as $M_{x,x+1,L}$ since it has a zero entry. If the orbit of $x$ hits a discontinuity point we find that $x$ is purely periodic. But we know that the orbit of $x$ ends up in 1 and that and that $x\neq 1$. We find a contradiction. This is basically the same argument as in~\cite{L} on page 49, but there the contradiction is reached faster since for the 'classical families' rationals never have a periodic expansion. Note that we cannot carry over the argument for $N>2$. Now that we have a neighborhood such that  for all points in the neighborhood   $M_{x^\prime,x^\prime,K}=M_{x,x,K}$ and $M_{x^\prime,x^\prime+1,L}=M_{x,x+1,L}$
and because we are on a matching interval we have $RM_{x^\prime,x^\prime,K}(T_{2,x^\prime}^K(x^\prime))= M_{x^\prime,x^\prime+1,L}(T_{2,x^\prime}^L(x^\prime+1))$. We find that $RM_{x,x,K}(T_{2,x^\prime}^K(x^\prime))= M_{x,x+1,L}(T_{N,x^\prime}^L(x^\prime+1)) $ for all $x^\prime$ sufficiently close, which is only possible when $RM_{x,x,K}\sim M_{x,x+1,L}$.\\

($\Leftarrow$) Now suppose that $RM_{x,x,K}\sim M_{x,x+1,L}$ then from \eqref{eq:xasMnTn}, on the one hand we have
\[
x+1= RM_{x,x,K}(T_{2,x}^K(x))
\]
and 
\[
x+1=M_{x,x+1,L}(T_{2,x}^L(x+1)).
\]
Using this, and $RM_{x,x,K}\sim M_{x,x+1,L}$ we find $T_{2,x}^K(x)=T_{2,x}^L(x+1)$. Now let $x^\prime$ be close enough to $x$ such that $M_{x^\prime,x^\prime,K}=M_{x,x,K}$ and $M_{x^\prime,x^\prime+1,L}=M_{x,x+1,L}$. Then we also find $T_{2,x^\prime}^K(x^\prime)=T_{2,x^\prime}^L(x^\prime+1)$ and therefore, $x$ is contained in a matching interval with exponents $K,L$.
\end{proof}

We now show that there are infinitely many rationals that are not in any matching interval. Note that for Tanaka-Ito continued fractions such rationals also exist but that matching holds for almost every parameter in that case; see~\cite{CLS}. In the case of Nakada's $\alpha$-continued fractions (\cite{N}) and Katok and Ugarcovici's continued fractions (\cite{KU}) they do not exist, giving that all rationals are contained in some matching interval. We call rationals that are not in any matching interval \emph{bad rationals}. First we show that we have infinitely many of them.

\begin{proposition}\label{prop:badrat}
For $N=2$ and $\alpha_n=\frac{1}{2^n}$, $n\geq 3$, we have that $\alpha_n$ is a bad rational.
\end{proposition}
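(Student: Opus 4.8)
The plan is to invoke Proposition~\ref{prop:matchrel}: as $\alpha_n=2^{-n}\in(0,\sqrt2-1]\cap\mathbb{Q}$ for $n\ge 3$, the parameter $\alpha_n$ lies in a matching interval with exponents $K,L$ if and only if $RM_{\alpha_n,\alpha_n,K}\sim M_{\alpha_n,\alpha_n+1,L}$. So I would first determine the two orbits and the only candidate exponent pairs, and then rule out the matrix relation for each of them. A direct computation gives $\tfrac2{\alpha_n}=2^{n+1}$ and $\tfrac2{\alpha_n}-\alpha_n=2^{n+1}-2^{-n}\notin\mathbb{N}$, so $d_1(\alpha_n)=2^{n+1}-1$ and $T_{2,\alpha_n}(\alpha_n)=1$; since $\alpha_n<1<\alpha_n+1$ and $T_{2,\alpha_n}(1)=1$ with digit $1$, the orbit of $\alpha_n$ is $\alpha_n\mapsto 1\mapsto 1\mapsto\cdots$, with digit string $(2^{n+1}-1,1,1,\dots)$. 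For $\alpha_n+1=\tfrac{2^n+1}{2^n}$ one checks the transient
\[
\alpha_n+1\ \mapsto\ \tfrac{2^n-1}{2^n+1}\ \mapsto\ \tfrac{4}{2^n-1}\ \mapsto\ \tfrac12\ \mapsto\ 1
\]
with digits $1,2,2^{n-1}-1,3$, after which the orbit is constantly $1$; the hypothesis $n\ge 3$ is used precisely here, in the floor evaluations (for $n=2$ the second digit is already different). For $n\ge 3$ these five points are pairwise distinct and none equals $\alpha_n$, so $T_{2,\alpha_n}^K(\alpha_n)=T_{2,\alpha_n}^L(\alpha_n+1)$ can only occur with both sides equal to $1$, i.e.\ $K\ge1$ and $L\ge4$. (Matching at the orbit level is of course Corollary~\ref{cor:N2ratfinorbit}; the issue is that it is never stable.)

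For $K\ge1$, $L\ge4$ we have $M_{\alpha_n,\alpha_n,K}=B_{2^{n+1}-1}B_1^{K-1}$ and $M_{\alpha_n,\alpha_n+1,L}=CB_1^{L-4}$ with $C:=B_1B_2B_{2^{n-1}-1}B_3$, and, setting $a=2^{n+1}$, short matrix products yield
\[
RB_{a-1}=\begin{pmatrix}1 & a+1\\ 1 & a-1\end{pmatrix},\qquad
C=\begin{pmatrix}a & 3a+8\\ a-2 & 3a+2\end{pmatrix}.
\]
Because $B_1$ is an invertible matrix, cancelling the common power of $B_1$ on the right turns $RM_{\alpha_n,\alpha_n,K}\sim M_{\alpha_n,\alpha_n+1,L}$ into exactly one of
\[
\text{(A)}\quad RB_{a-1}\sim CB_1^{t}\ \ (t=L-K-3\ge 0),
\qquad\text{or}\qquad
\text{(B)}\quad RB_{a-1}B_1^{t}\sim C\ \ (t=K-L+3\ge 0).
\]

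To dispose of (A) and (B) I would use the explicit power $B_1^{t}=\tfrac13\begin{pmatrix}2^t+2\eta & 2^{t+1}-2\eta\\ 2^t-\eta & 2^{t+1}+\eta\end{pmatrix}$ with $\eta:=(-1)^t$ (immediate by induction from~\eqref{eq:recforpn}--\eqref{eq:recforqn}, or from the eigenvalues $2,-1$ of $B_1$). Two invertible $2\times 2$ matrices represent the same M\"obius transformation only if one is a nonzero scalar multiple of the other, hence their first columns must be proportional. In case (A) the first column of $RB_{a-1}$ has both entries equal to $1$, so the two entries of the first column of $CB_1^t$ must agree; the arithmetic reduces this to $8\cdot 2^{t}=2\eta$, i.e.\ $\eta=2^{t+2}$, which is impossible because $|\eta|=1$. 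In case (B), proportionality of the first columns of $RB_{a-1}B_1^t$ and of $C$ reduces to $-4\cdot 2^{t}=2\eta$, i.e.\ $\eta=-2^{t+1}$, again impossible. (The cross-multiplication used for (B) is legitimate: the $(2,1)$-entry of $C$ is $a-2=2^{n+1}-2\neq 0$.) Therefore no pair $(K,L)$ can fulfil the matrix equivalence of Proposition~\ref{prop:matchrel}, so $\alpha_n$ lies in no matching interval; that is, $\alpha_n$ is a bad rational.

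There is no deep difficulty here once Proposition~\ref{prop:matchrel} is available; the whole thing is an elementary --- if somewhat lengthy --- computation. The two points needing care are (i) checking the orbit of $\alpha_n+1$ uniformly over all $n\ge 3$, since the floor computations genuinely change at $n=2$, and (ii) the cancellation step, where one must keep track of which side the leftover power of $B_1$ ends up on --- this is exactly why the proof bifurcates into cases (A) and (B). The remainder is routine bookkeeping with $2\times 2$ matrices.
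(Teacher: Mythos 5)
Your proof is correct, and its skeleton is the same as the paper's: reduce via Proposition~\ref{prop:matchrel} to the matrix condition, compute the orbits of $\alpha_n$ and $\alpha_n+1$ (your digit strings and the matrices $RB_{2^{n+1}-1}$ and $C$ agree exactly with the paper's $RM_{\alpha_n,\alpha_n,1}$ and $M_{\alpha_n,\alpha_n+1,4}$), and then rule out $RM_{\alpha_n,\alpha_n,K}\sim M_{\alpha_n,\alpha_n+1,L}$ for every admissible pair $(K,L)$. The only genuine difference is in that last step: the paper observes that right-multiplication by $B_1$ preserves the residues mod $2$, so that $RM_{\alpha_n,\alpha_n,K}$ always has all four entries odd while $\tfrac12 M_{\alpha_n,\alpha_n+1,L}$ always has an even top row and odd bottom row, and no scalar multiple can reconcile these parities; you instead diagonalize $B_1$ (eigenvalues $2,-1$), cancel the common power of $B_1$, and check non-proportionality of first columns explicitly, reaching $\eta=\pm 2^{t+c}$ with $|\eta|=1$, a contradiction. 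Both mechanisms are sound; the parity invariant is slicker and avoids the case split (A)/(B), while your computation has the small merit of being self-contained and of making explicit where the hypothesis $n\ge 3$ enters (the floor evaluations in the orbit of $\alpha_n+1$), which the paper leaves implicit.
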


\begin{proof}
We have that $\alpha_n=[0;2^{n+1}-1,\overline{1}]_{\alpha_n}$ and $\alpha_n+1=[0;1,2,2^{n-1}-1,3,\overline{1}]_{\alpha_n}$. But then we have that $T_{2,\alpha_n}(\alpha_n)=T_{2,\alpha_n}^4(\alpha_n+1)$,
\[
RM_{\alpha_n,\alpha_n,1}=
\left(\begin{matrix}
1 & 2^{n+1}+1 \\
1 & 2^{n+1}-1
\end{matrix}\right) ,
\quad \text{and } \quad
 M_{\alpha_n,\alpha_n+1,4}=
 \left(\begin{matrix}
2^{n+1} & 3*2^{n+1}+8 \\
2^{n+1}-2& 3*2^{n+1}+2
\end{matrix}\right).
\]
This gives
\[
RM_{\alpha_n,\alpha_n,1}
\left(\begin{matrix}
0 & 2 \\
1 & 1
\end{matrix}\right)^k \equiv
\left(\begin{matrix}
1 & 1 \\
1 & 1
\end{matrix}\right)\,\, (\text{mod $2$}).
\]
Let $\widehat{M}_{\alpha_n,\alpha_n+1,4}=\frac{1}{2}M_{\alpha_n,\alpha_n+1,4}$ then $\widehat{M}_{\alpha_n,\alpha_n+1,4}\sim M_{\alpha_n,\alpha_n+1,4} $ and 
\[
\widehat{M}_{\alpha_n,\alpha_n+1,4}
\left(\begin{matrix}
0 & 2 \\
1 & 1
\end{matrix}\right)^k\equiv
\left(\begin{matrix}
0 & 0 \\
1 & 1
\end{matrix}\right)\,\, (\text{mod $2$}).
\]
We find that for all $K$ and $L$ we have
$RM_{x,x,K}\not\sim M_{x,x+1,L}$. Therefore, $\alpha_n$ is bad.
\end{proof}
On the other hand we also find infinitely many matching intervals.
\begin{theorem}\label{theorem:matchingintervals}
Let $k\geq 0$.
On the following intervals we have matching
\begin{itemize}
\item[(i)]
\[
\left(\frac{-17-8k+\sqrt{369+304k+64k^2}}{10+4k},\frac{-2-k+\sqrt{6+5k+k^2}}{2+k}\right)
\] with matching exponents $(3,5)$ around $\alpha_k=\frac{2}{9+4k}$ that matches with exponents $(1,5)$.
\item[(ii)]
\[
\left(
\frac{-81 - 32 k + \sqrt{8289 + 5824 k + 1024 k^2}}{54 + 20k},
\frac{-10-4k+\sqrt{132 + 92 k + 16 k^2}}{8 + 3k}
\right)
\] with matching exponents $(5,5)$ around $\alpha_k=\frac{8}{43+16k}$ that matches with exponents $(2,4)$.
\item[(iii)]
\[
\left(
\frac{-133 - 52k + \sqrt{24033 + 16120k + 2704k^2}}{122 + 44k},
\frac{-273-104k+\sqrt{13}\sqrt{7061 + 4848 k + 832 k^2}}{166 + 60k}
\right)
\] with matching exponents $(6,6)$ around $\alpha_k=\frac{13}{72+26k}$ that matches with exponents $(4,6)$.
\item[(iv)]
\[
\left(
\frac{-363 - 120k + \sqrt{3}\sqrt{53603 + 32080k + 4800k^2}}{242 + 76k},
\frac{-45-15k+\sqrt{15}\sqrt{170 + 101 k + 15 k^2}}{35 + 11k}
\right)
\] with matching exponents $(7,7)$ around $\alpha_k=\frac{30}{191+60k}$ that matches with exponents $(4,6)$.

\end{itemize}
\end{theorem}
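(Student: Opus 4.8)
The plan is to treat the four families in parallel. For a fixed family, a fixed $k\ge 0$, claimed interval $I_k=(c_k,d_k)$ and claimed exponents $(K,L)$, I would: \textbf{(a)} identify the finite strings of partial quotients of $\alpha$ and of $\alpha+1$ that are constant throughout $I_k$; \textbf{(b)} reduce the matching identity $T_{2,\alpha}^{K}(\alpha)=T_{2,\alpha}^{L}(\alpha+1)$ to a single identity between $2\times2$ integer matrices and verify it; \textbf{(c)} check that the strings from (a) really are constant on all of $I_k$. Once (a)--(c) are in place, $T_{2,\alpha}^{K}(\alpha)=T_{2,\alpha}^{L}(\alpha+1)$ holds for every $\alpha$ in the open interval $I_k$, so the stability clause in Definition~\ref{def:match} is automatic and matching holds on all of $I_k$ with exponents $(K,L)$.

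\textbf{Step (a): the partial quotients.}
The left endpoint parameter $\alpha_k$ is rational, so by Corollary~\ref{cor:N2ratfinorbit} the $T_{2,\alpha_k}$-orbits of $\alpha_k$ and of $\alpha_k+1$ reach the fixed point $1$ after finitely many steps; iterating $T_{2,\alpha_k}$ explicitly produces these orbits and hence the strings of partial quotients. For family (i), for instance, one gets $\alpha_k=[0;8+4k,\overline1]_2$ and $\alpha_k+1=[0;1,2,k+1,2,2,\overline1]_2$, and one reads off the analogous (piecewise $k$-linear) strings in the other three families. The claim is then that on all of $I_k$ the first $K$ partial quotients of $\alpha$ and the first $L$ partial quotients of $\alpha+1$ are the leading blocks of these strings; note that $\alpha_k$ is exactly the point where the tail of the $\alpha$-orbit has already reached the fixed point $1$, which is why $\alpha_k$ carries an earlier coincidence with exponents $(1,5)$, $(2,4)$, $(4,6)$, $(4,6)$ in the four cases.

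\textbf{Step (b): from matching to a matrix identity.}
Once the strings are fixed, $M_{\alpha,\alpha,K}=B_{d_1}\cdots B_{d_K}$ and $M_{\alpha,\alpha+1,L}=B_{d_1}\cdots B_{d_L}$ are \emph{constant} integer matrices depending only on $k$, and invertible since $\det M_{\alpha,\alpha,K}=(-2)^K\neq0$ by~\eqref{eq:noncoprimeprod}. Using $\alpha+1=R(\alpha)$ and~\eqref{eq:xasMnTn} one has $\alpha+1=RM_{\alpha,\alpha,K}\bigl(T_{2,\alpha}^K(\alpha)\bigr)=M_{\alpha,\alpha+1,L}\bigl(T_{2,\alpha}^L(\alpha+1)\bigr)$, so — exactly as in the ($\Leftarrow$) direction of Proposition~\ref{prop:matchrel}, which does not use rationality of $\alpha$ — the matching identity on $I_k$ is equivalent to $RM_{\alpha,\alpha,K}\sim M_{\alpha,\alpha+1,L}$, i.e.\ to these two matrices being proportional. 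This is a finite computation with the $B_d$'s in each family: for instance in family (i) one checks that both $RM_{\alpha,\alpha,3}$ and $M_{\alpha,\alpha+1,5}$ are proportional to $\left(\begin{smallmatrix}6+2k & 16+6k\\ 5+2k & 13+6k\end{smallmatrix}\right)$, and the remaining three families are settled by the same bookkeeping.

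\textbf{Step (c): constancy of the partial quotients — the main obstacle.}
The heart of the proof is to show the partial quotients really are constant on $I_k$. For $0\le j<K$ the point $T_{2,\alpha}^j(\alpha)=M_{\alpha,\alpha,j}^{-1}(\alpha)$ must stay in the cylinder $\bigl(\tfrac{2}{d_{j+1}+1+\alpha},\tfrac{2}{d_{j+1}+\alpha}\bigr]$ for all $\alpha\in I_k$, and similarly for $\alpha+1$; since both $T_{2,\alpha}^j(\alpha)$ and the cylinder endpoints are M\"obius functions of $\alpha$, each such requirement is an inequality between explicit M\"obius functions of $\alpha$ on $I_k$, so by monotonicity it suffices to check it at $c_k$ and $d_k$. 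In fact $c_k$ and $d_k$ are precisely the degenerate cases: for family (i) the two endpoints are the positive roots of $(5+2k)\alpha^2+(17+8k)\alpha-4=0$ and $(k+2)\alpha^2+(2k+4)\alpha-1=0$, which are exactly the parameters at which $T_{2,\alpha}^2(\alpha)$, resp.\ $T_{2,\alpha}(\alpha)$, lands on the boundary between the cylinders of the digits $1$ and $2$. The remaining work — identifying, for each of the four families and each $k$, which iterate controls which endpoint and verifying the finitely many M\"obius inequalities in between, with possible case distinctions in $k$ — is routine but lengthy, and I would organise it family by family. Finally, re-evaluating the orbits at the rational points $\alpha_k$ records the stated shorter coincidences, which completes the proof.
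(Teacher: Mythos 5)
Your proposal is correct and follows essentially the same route as the paper: establish the $(2,\alpha_k)$-expansions of $\alpha_k$ and $\alpha_k+1$, verify the proportionality $RM_{\alpha,\alpha,K}\sim M_{\alpha,\alpha+1,L}$ to invoke (the $\Leftarrow$ direction of) Proposition~\ref{prop:matchrel}, and then identify the matching interval as the intersection of the cylinder sets on which the first $K$ digits of $\alpha$ and first $L$ digits of $\alpha+1$ are constant, with the endpoints arising as the degenerate (boundary-hitting) parameters. Your endpoint quadratics for family (i) agree with the paper's $\Delta^\alpha(8+4k,1,1)$ computation, and like the paper you leave families (ii)--(iv) to the same bookkeeping.
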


\begin{proof}
We will prove the intervals in ($i$) and give the characteristics of the others in Table \ref{tab:match}.
First we will show that $\alpha_k=[0;8+4k,\overline{1}]_{\alpha_k}$ and $\alpha_k+1=[0;1,2,k+1,2,2,\overline{1}]_{\alpha_k}$. To see these  are \textit{some} expansions, note that $1=[0;\overline{1}]$ and therefore we have
\[
\alpha_k=\frac{2}{\displaystyle 8+4k+\frac{2}{\displaystyle 1+\frac{2}{\displaystyle 1+\ddots}}}=\frac{2}{9+4k}
\]
and
\[
\alpha_k+1=\frac{2}{\displaystyle 1+\frac{2}{\displaystyle 2+\frac{2}{\displaystyle 1+k+\frac{2}{\displaystyle 2+\frac{2}{2+1}}}}}=\frac{4k+11}{4k+9}=\frac{2}{4k+9}+1.
\]
To see these are \textit{the} $(N,\alpha_k)$-expansions let $\tilde{\alpha_k}$ be the sequence $8+4k,\overline{1}$ and $\tilde{\alpha_k}+1$ the sequence $1,2,k+1,2,2,\overline{1}$. Then, by using the alternating ordering and the shift map $\sigma$, we see that
\[
\tilde{\alpha_k}\preceq \sigma^n(\tilde{\alpha_k})\prec \tilde{\alpha_{k}}+1.
\]
Therefore we found the expansions of $\alpha_k$ and $\alpha_k+1$.We could also have used the map $T_{2,\alpha_k}$, which immediately yields the result for $\alpha_k$, as
$$
\frac{2}{\phantom{Z}\frac{2}{9+4k}\phantom{Z}} - (8+4k) = 1,
$$
but which is more involved for $\alpha_k+1$. Note that $\alpha_k$ and $\alpha_k+1$ match with matching exponents $(1,5)$. Now for the matrices $M_{\alpha_k,\alpha_k,3}$ and $M_{\alpha_k,\alpha_k+1,5}$ we have
\[
RM_{\alpha_k,\alpha_k,3}=
\left(\begin{matrix}
4k+12 & 12k+32 \\
4k+10 & 12k+26
\end{matrix}\right),\quad
M_{\alpha_k,\alpha_k+1,5}=
\left(\begin{matrix}
8k+24 & 24k+64 \\
8k+20 & 24k+52
\end{matrix}\right).
\]
We find that $M_{\alpha_k,\alpha_k,3}\sim M_{\alpha_k,\alpha_k+1,5}$. Using Proposition \ref{prop:matchrel}, we now found a matching interval. To find the boundaries, we need to identify the largest interval for which all $\alpha$ in the interval start with the same 3 digits as $\alpha_k$ in their $(N,\alpha)$-expansion and $\alpha+1$ then starts with the same 5 digits as $\alpha_k+1$.
Let us define
\[
\Delta^\alpha(d_1,d_2,\ldots, d_n)=\{\alpha\in (0,\sqrt{N}-1):\alpha=[0;d_1,d_2,\ldots,d_n,\ldots]_{\alpha}\}.
\]
The boundaries  of $\Delta^\alpha(d_1,d_2,\ldots, d_n)$ are given by the equations:
\[
\alpha_1=[0;d_1,d_2,\ldots,d_{n}+1,\alpha_1]_{\alpha_1} = [0;\overline{d_1,d_2,\ldots,d_{n}+1}]_{\alpha_1},
\]
and
\[
\alpha_2=\begin{cases}
    [0;d_1,d_2,\ldots, d_n,\alpha_2]_{ \alpha_2}, & \text{when } d_n>1,\\
    [0;d_1,d_2,\ldots, d_{n-1},\alpha_2+1]_{ \alpha_2}, & \text{when } d_n=1,
\end{cases}
\]
see Figure \ref{fig:cyl} for an explanation. 
	\begin{figure}[ht]
		\centering
		\subfigure{\begin{tikzpicture}[scale=5]
				\draw(0,0)node[below]{\small $\alpha$}--(1,0)node[below]{\small $\alpha+1$}--(1,1)--(0,1)node[left]{\small $\alpha+1$}--(0,0);

				\draw[thick, purple!50!black, smooth, samples =20, domain=2/3:1] plot(\x-0.25,{2/\x-2});
				\draw(2/3-0.25,0)circle(0.35pt)[color=black!,fill=white!, fill opacity=1];
    			\draw(0.75,0)circle(0.35pt)[color=black!,fill=white!, fill opacity=1];
                \draw(0.6,0)circle(0.35pt)[color=black!,fill=white!, fill opacity=1]node[above]{\tiny $T_\alpha^{n-1}(\alpha)$};
				
				\draw[dotted](2/3-0.25,0)node[below]{\small $\frac{N}{d+1+\alpha}$}--(2/3-0.25,1);
    \draw[dotted](0.75,0)node[below]{\small $\frac{N}{d+\alpha}$}--(0.75,1);
				
\end{tikzpicture}}
  \caption{When picking $\alpha^\prime$ extremely close to $\alpha$ we can ensure to pick the same branches $\frac{N}{x}-d$ as for $\alpha$. If we pick the first $n-1$ digits the same, the $(N,\alpha^\prime)$-expansion of $\alpha^\prime$ will start with $d_1,\ldots, d_{n-1}$. To get also the $n^{\text{th}}$ digit the same we need to have that $\frac{N}{d+1+\alpha^\prime}<T_{\alpha^\prime}^{n-1}(\alpha^\prime)\leq \frac{N}{d+\alpha^\prime}$  with $d$ the $n^{\text{th}}$ digit. This will give you the boundary cases $\alpha_1$ and $\alpha_2$.}\label{fig:cyl}
  \end{figure}
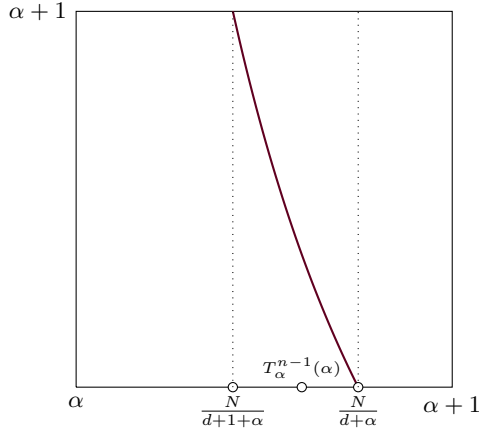

When $n$ is odd the interval is given by $(\alpha_1,\alpha_2)$ and given by $(\alpha_2,\alpha_1)$ whenever $n$ is even\footnote{strictly speaking we should include $\alpha_2$ in the interval but this might lead to degenerate cases which we wish to avoid.}.
In other words, all $\alpha$ that start with the digits $d_1,d_2,\ldots, d_n$ in their $(2,\alpha)$-expansion. Likewise we define
\[
\Delta^{\alpha+1}(d_1,d_2,\ldots, d_n)=\{\alpha\in (0,\sqrt{N}-1):\alpha+1=[0;d_1,d_2,\ldots,d_n,\ldots]_{\alpha}\}.
\]
Similarly as for $\alpha$, the boundaries are given by the equations
\[
\alpha_1+1=[0;d_1,d_2,\ldots,d_{n}+1,\alpha_1]_{ \alpha_1},
\]
and
\[
\alpha_2+1=\begin{cases}
    [0;d_1,d_2,\ldots, d_n,\alpha_2]_{ \alpha_2}, & \text{when } d_n>1,\\
    [0;d_1,d_2,\ldots, d_{n-1},\alpha_2+1]_{ \alpha_2}, & \text{when } d_n=1,
\end{cases}
\]
Then the matching interval is given by $\Delta^\alpha(8+4k,1,1)\cap \Delta^{\alpha+1}(1,2,k+1,2,2)$. Calculating the boundaries we find
\[
\Delta^\alpha(8+4k,1,1)=\left(\frac{-17-8k+\sqrt{369+304k+64k^2}}{10+4k},\frac{-2-k+\sqrt{6+5k+k^2}}{2+k}\right)
\]
and
\[
\Delta^{\alpha+1}(1,2,k+1,2,2)=\left(\frac{-17-8k+\sqrt{369+304k+64k^2}}{10+4k},\frac{-3k-6+\sqrt{9k^2+42k+51}}{2k+5}\right).
\]
Since $\Delta^\alpha(8+4k,1,1)\subset\Delta^{\alpha+1}(1,2,k+1,2,2)$, the matching interval is given by
\[
\left(\frac{-17-8k+\sqrt{369+304k+64k^2}}{10+4k},\frac{-2-k+\sqrt{6+5k+k^2}}{2+k}\right).
\]
The proof of the other intervals goes exactly the same. Therefore we omit it and instead give the characteristic data in Table \ref{tab:match}.
\begin{table}[]
    \centering
    \begin{tabular}{|c|l|l|c|}\hline
 $\alpha_k$     & expansion of $\alpha_k$ & expansion of $\alpha_k+1$ & $RM_{\alpha_k,\alpha_k,K}$  \\\hline
       $\frac{2}{9+4k}$ & $[0;8+4k,\overline{1}]$ & $[0;1,2,k+1,2,2,\overline{1}]$ &
$\left(\begin{matrix}
4k+12 & 12k+32 \\
4k+10 & 12k+26
\end{matrix}\right)$ \\ \hline
$\frac{8}{43+16k}$   & $[0;10+4k,2,2,\overline{1}]$ & $[0;1,2,k+2,10,2,\overline{1}]$ &
$\left(\begin{matrix}
40k+128 & 88k+280 \\
40k+108 & 88k+236
\end{matrix}\right)$
\\ \hline 
$\frac{13}{72+26k}$  & $[0;10+4k,1,2,5,\overline{1}]$ & $[0;1,2,k+2,7,4,2,\overline{1}]$ &
$\left(\begin{matrix}
120k+392 & 296k+968 \\
120k+332 & 296k+820
\end{matrix}\right)$\\ \hline 
$\frac{30}{191+60k}$ & $[0;12+4k,2,2,2,2,\overline{1}]$ & $[0;1,2,k+2,2,2,12,2,\overline{1}]$ &
$\left(\begin{matrix}
304k+1120 & 656k+2416 \\
304k+968 & 656k+2088
\end{matrix}\right)$\\ \hline 
    \end{tabular}\vspace{1em}
 \caption{The $(2,\alpha_k)$-expansions of $\alpha_k$ and $\alpha_k+1$, from Theorem \ref{theorem:matchingintervals} and the corresponding matrix for $\alpha_k$. Note that the corresponding matrix for $\alpha_k+1$ is the same matrix except for the case of $\alpha_k=\frac{2}{9+4k}$ where the entries of the corresponding matrix are twice as much.}
    \label{tab:match}
\end{table}   
\end{proof}

\begin{remark}
{\rm As a final remark, we would like to mention that the matching intervals in Theorem \ref{theorem:matchingintervals} are hard to find since they are extremely small. Moreover, the rationals in Proposition \ref{prop:badrat} are not the only bad ones. With computer simulations one can find many more. It would be interesting to find out whether there is an interval that does not contain any matching interval in the case of $N=2$.}
\end{remark}

\bibliographystyle{alpha}
\bibliography{MatchingNexpansions}


\end{document}